\def\MyNewTheorem#1[#2]#3{%
  \newaliascnt{#1}{#2}
  \newtheorem{#1}[#1]{#3}
  \aliascntresetthe{#1}
  \expandafter\newcommand\csname #1autorefname\endcsname{#3}
}
\newtheorem*{rep@theorem}{\rep@title}
\newcommand{\newreptheorem}[2]{%
\newenvironment{rep#1}[1]{%
 \def\rep@title{#2 \ref{##1}}%
 \begin{rep@theorem}}%
 {\end{rep@theorem}}}
\newtheorem{theorem}{Theorem}
\theoremstyle{definition}
\numberwithin{equation}{section}
\def\equationautorefname~#1\null{(#1)\null}
\def\itemautorefname~#1\null{#1\null}
\newcommand{\C}{{\ensuremath{\mathbb{C}}}}
\title{Diagrams for Relative Trisections}
\author{Nickolas A. Castro \and David T. Gay \and Juanita Pinz\'on-Caicedo}
\thanks{This work was supported by a grant from the Simons Foundation (\# 359873, David Gay)}
\date{}
\begin{document}
\begin{abstract}
We establish a correspondence between trisections of smooth, compact, oriented $4$--manifolds with connected boundary and diagrams describing these trisected $4$--manifolds. Such a diagram comes in the form of a compact, oriented surface with boundary together with three tuples of simple closed curves, with possibly fewer curves than the genus of the surface, satisfying a pairwise condition of being standard. This should be thought of as the $4$--dimensional analog of a sutured Heegaard diagram for a sutured $3$--manifold. We also give many foundational examples. 
\end{abstract}

\maketitle
\section{Introduction}

In~\cite{GKTrisections}, Gay and Kirby defined, and proved existence and uniqueness statements for, trisections of both closed $4$--manifolds and compact $4$--manifolds with connected boundary. In the latter, relative case, the trisections restrict to open book decompositions on the bounding $3$--manifolds. In the closed case, the same paper discusses trisection diagrams; these are diagrams involving curves on surfaces which uniquely determine closed, trisected $4$--manifolds up to diffeomorphism. The aim of this paper is to complete the story by defining relative trisection diagrams and showing that they uniquely determine trisected $4$--manifolds with connected boundary, as well as to present a series of fundamental examples.\\

Before recalling the background definitions in~\cite{GKTrisections}, we introduce some basic definitions and state the main result of the present article.

\begin{definition}
 Two $(n+1)$--tuples of the form $(\Sigma, \alpha^1, \ldots, \alpha^n)$, where each $\alpha^i$ is a collection $\alpha^i = \{\alpha^i_1, \ldots, \alpha^i_k\}$ of $k$ disjoint simple closed curves on the surface $\Sigma$, are {\em diffeomorphism and handle slide equivalent} if they are related by a diffeomorphism between the surfaces and a sequence of handle slides within each $\alpha^i$; i.e. one is only allowed to slide curves from $\alpha^i$ over other curves from $\alpha^i$, but not over curves from $\alpha^j$ when $j \neq i$.
\end{definition}

\begin{definition}
 A {\em $(g,k;p,b)$--trisection diagram} (where $2p+b-1\leq k\leq g+p+b-1$) is a $4$--tuple $(\Sigma,\alpha,\beta,\gamma)$, where $\Sigma$ is a surface of genus $g$ with $b$ boundary components and each of $\alpha$, $\beta$ and $\gamma$ is a collection of $g-p$ simple closed curves such that each triple $(\Sigma,\alpha,\beta)$, $(\Sigma,\beta,\gamma)$, and $(\Sigma,\gamma,\alpha)$ is diffeomorphism and handle slide equivalent to the triple $(\Sigma,\delta,\epsilon)$ shown in \autoref{F:StandardForPairs}.
\end{definition}

\begin{figure}[h]
\centering
\def\svgwidth{\textwidth}
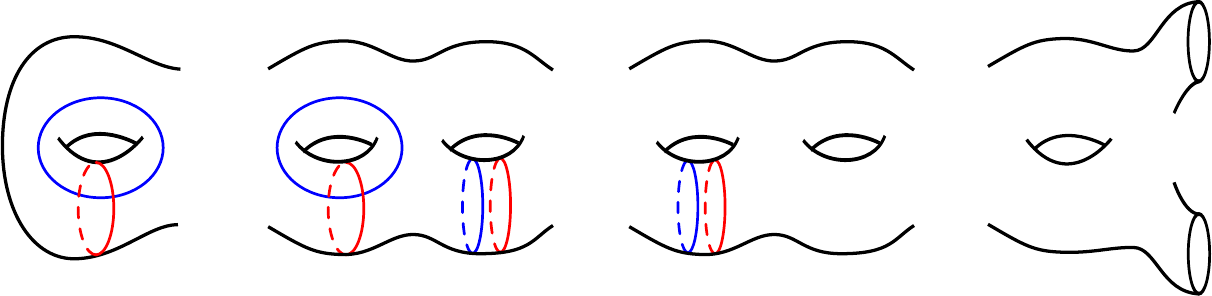
\caption{The standard model $(\Sigma,\delta,\epsilon)$.}\label{F:StandardForPairs}
\end{figure}

 The following theorem, the main result of this paper, references trisections of $4$--manifolds with boundary, but we defer the definition of this concept to a later section. If this is new to the reader, the main thing to know at the moment is that a trisection of a $4$--manifold $X$ is a decomposition into three codimension--$0$ submanifolds $X=X_1 \cup X_2 \cup X_3$, and that in the relative case a trisection induces an open book decomposition on $\partial X$.

 \begin{theorem} \label{T:DiagramsForTrisections}
  For every $(g,k;p,b)$--trisection diagram $(\Sigma,\alpha,\beta,\gamma)$ there is a unique (up to diffeomorphism) trisected $4$--manifold with connected boundary $X = X_1 \cup X_2 \cup X_3$ such that, with respect to a fixed identification $\Sigma \cong X_1 \cap X_2 \cap X_3$, the $\alpha$, $\beta$ and $\gamma$ curves, respectively, bound disks in $X_1 \cap X_2$, $X_2 \cap X_3$ and $X_3 \cap X_1$. In particular, the open book decomposition on $\partial X$ has $b$ binding components and pages of genus $p$. Furthermore, any trisected $4$--manifold with connected boundary is determined in this way by some relative trisection diagram, and any two relative trisection diagrams for the same $4$--manifold trisection are diffeomorphism and handle slide equivalent.
 \end{theorem}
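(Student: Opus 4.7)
The plan is to adapt the closed-case argument of Gay--Kirby to the sutured/relative setting, handling separately the four assertions in the theorem: construction from a diagram, uniqueness of the $4$--manifold, realizability of every trisection by a diagram, and handle-slide equivalence of any two diagrams for the same trisection.

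First, the construction from a diagram. I would show that each pair $(\Sigma,\alpha)$, $(\Sigma,\beta)$, $(\Sigma,\gamma)$, being equivalent to the standard model $(\Sigma,\delta,\epsilon)$ of \autoref{F:StandardForPairs}, determines, uniquely up to diffeomorphism, a ``sutured compression body'' whose top boundary is $\Sigma$ and whose bottom boundary carries the page-and-binding structure read off from the standard model (with $b$ binding components and pages of genus $p$). Moreover, the pair together determines a pair of standard $3$--dimensional handlebody-like pieces $H_\alpha$, $H_\beta$ obtained by attaching $2$--handles to $\Sigma \times [0,1]$ along $\alpha$ and $\beta$ respectively, whose union across $\Sigma$ is this sutured compression body. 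Iterating for the three pairs yields pieces $H_{\alpha\beta}$, $H_{\beta\gamma}$, $H_{\gamma\alpha}$ which glue along their matching binding/page structure to form the central spine $Y$ of a would-be trisection. Since each $H_\alpha$, $H_\beta$, $H_\gamma$ is standard, each $X_i$ is then declared to be the unique standard $4$--dimensional sector it bounds (in the sense of the Gay--Kirby relative trisection model), and the $X_i$'s glue along $Y$ to yield $X$.

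Second, the uniqueness up to diffeomorphism of $X$ reduces to two independent uniqueness statements: (a) handle slides within $\alpha^i$ correspond to isotopies of the compressing-disk system in the corresponding compression body, so each pair determines its sutured piece up to diffeomorphism; (b) each standard sutured piece bounds a unique standard $4$--dimensional trisection sector. The open book on $\partial X$ with $b$ binding components and pages of genus $p$ then falls out of the construction because the standard model is precisely the diagram of an open book with that data. Conversely, given a relative trisection $X=X_1\cup X_2\cup X_3$, the central surface $\Sigma=X_1\cap X_2\cap X_3$ is canonical, each pairwise intersection is a compression body whose cut systems $\alpha$, $\beta$, $\gamma$ yield the curves of the diagram, and by the very definition of a relative trisection the three resulting pairs must each be equivalent to the standard model.

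Finally, for equivalence of any two diagrams for a fixed trisection one invokes the classical fact that a cut system of compressing disks in a fixed compression body is unique up to isotopy and handle slides \emph{within that system}; applying this to each of the three compression bodies $X_i \cap X_{i+1}$ independently produces exactly the allowed diffeomorphism-and-handle-slide equivalence, with no slides between different tuples.

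I expect the main obstacle to be the sutured/boundary bookkeeping in step one: verifying that the three sutured pieces assemble along their lower boundaries into a coherent spine $Y$ carrying a well-defined open book structure, and that the three standard sectors $X_i$ glue along $Y$ with matching open book structure on $\partial X$. This is where the relative case genuinely departs from the closed case in \cite{GKTrisections}, since one has to track the $b$ binding components and the genus--$p$ page data through the gluings, and show that the standard model is rigid enough that handle slides within one tuple do not disturb this structure.
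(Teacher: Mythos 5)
Your overall scaffolding matches the paper's strategy (build the spine from relative compression bodies determined by the three pairs, fill in standard pieces, and quote uniqueness of cut systems in a compression body for the slide-equivalence statement), but there is a genuine gap at the step you summarize as ``(b) each standard sutured piece bounds a unique standard $4$--dimensional trisection sector,'' and again where you ``declare'' each $X_i$ to be the standard sector it bounds. Filling in a sector is not a formality: besides capping the closed $\#^k S^1\times S^2$ boundary components, which is unique only by Laudenbach--Po\'enaru \cite{LaudenbachPoenaru}, one must glue a third of an open book, $([-1,1]\times P)\cup(D\times\partial P)$, onto each sutured piece $\overline{\partial X_i\setminus\partial X}\cong([-1,1]\times P)\#(\#^l S^1\times S^2)$. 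That gluing is a priori ambiguous by an arbitrary mapping class of the page $P$ rel boundary, so without further argument the resulting $X$ (and the monodromy of the induced open book) could depend on the choices. This is exactly the point where the relative case departs from \autoref{T:ClosedDiagrams}, and it is why the paper proves a separate technical lemma: any self-diffeomorphism of $([-1,1]\times P)\#(\#^l S^1\times S^2)$, with the sutured structure $\Gamma=[-1,1]\times\partial P$, $P^\pm=\{\pm1\}\times P$, which is the identity on $\Gamma\cup P^-$, restricts to $P^+$ as a map isotopic to the identity rel boundary (proved by observing that curves of the form $(\{0\}\times a)\cup([0,1]\times\partial a)\cup(\{1\}\times a)$ bound disks and invoking Baer's theorem \cite{Baer}). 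The resulting \autoref{C:UniqueIXP} produces a canonical ``identity'' identification of the two pages of each sutured piece, which is what makes the gluing of the open-book thirds, and hence $X$ itself, unique. Your proposal flags ``sutured/boundary bookkeeping'' as the expected difficulty but supplies no mechanism playing the role of this lemma, so the uniqueness clause of the theorem (and the claim that the diagram alone, without arcs, determines the boundary open book) is not established by your argument.

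A secondary, smaller point: your appeal to uniqueness of disk systems up to slides should be made for \emph{relative} compression bodies (surfaces with boundary compressing to lower-genus surfaces with the same boundary), for which the paper cites Johannson together with Casson--Gordon rather than the closed-handlebody statement alone; with that reference and with \autoref{C:UniqueIXP} inserted at the filling step, your outline aligns with the paper's proof, which constructs $X$ explicitly as $B^2\times\Sigma$ with $I\times C_\alpha$, $I\times C_\beta$, $I\times C_\gamma$ attached, then glues the three open-book thirds canonically and caps the three inner $\#^k S^1\times S^2$ boundaries.
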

 
 As a consequence, the monodromy of the open book decomposition on $\partial X$ is also completely determined by the diagram $(\Sigma,\alpha,\beta,\gamma)$. We now describe how to read off the monodromy from the diagram. 
 \begin{definition}
  Given a compact oriented surface $\Sigma$, consider a pair $(\alpha=(\alpha_1, \ldots, \alpha_k), a=(a_1, \ldots, a_l))$, where each $\alpha_i$ is a simple closed curve in $\Sigma$, each $a_j$ is a properly embedded arc in $\Sigma$, and $\{\alpha_1, \ldots, \alpha_k, a_1, \ldots, a_l\}$ are disjoint. We say that another such pair $(\alpha',a')$ is {\em handle slide equivalent} to $(\alpha,a)$ if $(\alpha',a')$ is obtained from $(\alpha,a)$ by a sequence of the following two operations: (1) Slide one simple closed curve in $\alpha$ over another simple closed curve in $\alpha$. (2) Slide one arc in $a$ over a simple closed curve in $\alpha$.
 \end{definition}
 Note that we do not allow ``arc slides'', in which arcs in $a$ slide over other arcs in $a$.

 We adopt the following notation: Given a surface $\Sigma$ and a collection of simple closed curves $\alpha$, $\Sigma_\alpha$ denotes the surface obtained by performing surgery along $\alpha$. This comes with an embedding $\phi_\alpha: \Sigma \setminus \alpha \to \Sigma_\alpha$, the image of which is the complement of a collection of pairs of points, one for each component of $\alpha$.

 \begin{theorem} \label{T:DiagramGivesMonodromy} 
 A relative trisection diagram $(\Sigma,\alpha,\beta,\gamma)$ encodes an open book decomposition on $\partial X$ with page given by $\Sigma_\alpha$, the surface resulting from $\Sigma$ by performing surgery along the $\alpha$ curves, and monodromy $\mu : \Sigma_\alpha \to \Sigma_\alpha$ determined by the following algorithm:
  \begin{enumerate}
   \item Choose an ordered collection of arcs $a$ on $\Sigma$, disjoint from $\alpha$ and such that its image $\phi_\alpha(a)$ in $\Sigma_\alpha$ cuts $\Sigma_\alpha$ into a disk.
   \item There exists a collection of arcs $a_1$ and simple closed curves $\beta'$ in $\Sigma$ such that $(\alpha,a_1)$ is handle slide equivalent to $(\alpha,a)$, $\beta'$ is handle slide equivalent to $\beta$, and $a_1$ and $\beta'$ are disjoint. (We claim that in this step we do not need to slide $\alpha$ curves over $\alpha$ curves, only $a$ arcs over $\alpha$ curves and $\beta$ curves over $\beta$ curves.) Choose such an $a_1$ and $\beta'$
   \item There exists a collection of arcs $a_2$ and simple closed curves $\gamma'$ in $\Sigma$ such that $(\beta',a_2)$ is handle slide equivalent to $(\beta',a_1)$, $\gamma'$ is handle slide equivalent to $\gamma$, and $a_2$ and $\gamma'$ are disjoint. (Again we claim that we do not now need to slide $\beta'$ curves over $\beta'$ curves.) Choose such an $a_2$ and $\gamma'$
   \item There exists a collection of arcs $a_3$ and simple closed curves $\alpha'$ in $\Sigma$ such that $(\gamma',a_3)$ is handle slide equivalent to $(\gamma',a_2)$, $\alpha'$ is handle slide equivalent to $\alpha$, and $a_3$ and $\alpha'$ are disjoint. (Now we do not need to slide $\gamma'$ curves over $\gamma'$ curves.) Choose such an $a_3$ and $\alpha'$.
   \item The pair $(\alpha',a_3)$ is handle slide equivalent to $(\alpha,a_*)$ for some collection of arcs $a_*$. Choose such an $a_*$. Note that now $a$ and $a_*$ are both disjoint from $\alpha$ and thus we can compare $\phi_\alpha(a)$ and $\phi_\alpha(a_*)$ in $\Sigma_\alpha$.
   \item The monodromy $\mu$ is the unique (up to isotopy) map such that $\mu(\phi_\alpha(a)) = \phi_\alpha(a_*)$, respecting the ordering of the collections of arcs.
  \end{enumerate}
 \end{theorem}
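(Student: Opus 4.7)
The plan is to realize the arcs $\phi_\alpha(a)$ as a cut system for the page $P$ of the induced open book on $\partial X$, and to interpret Steps 2 through 5 of the algorithm as transporting this cut system once around the binding, so that Step 6 simply reads off $\mu(\phi_\alpha(a)) = \phi_\alpha(a_*)$ on the fixed copy $P = \Sigma_\alpha$.

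First I would unpack the geometry of the induced open book. A relative trisection cuts $\partial X$ into three sectors $\partial X \cap X_i$ meeting along the common binding $\partial \Sigma$, each a product of the page $P$ with an interval. A page sitting in the $X_1$-sector, pushed inward until it meets $\Sigma$ and compressed along the $\alpha$-disks (which live in the compression body $X_1 \cap X_2$), gives a canonical identification $P \cong \Sigma_\alpha$; analogously $P \cong \Sigma_\beta$ via the $X_2$-sector and $P \cong \Sigma_\gamma$ via the $X_3$-sector. Traversing the sectors cyclically then produces, by definition, the monodromy $\mu : P \to P$. On the other hand, the arcs $\phi_\alpha(a)$ cut $\Sigma_\alpha = P$ into a disk, so after a small isotopy taking their endpoints to $\partial P$ they form a cut system; since any self-diffeomorphism of a disk rel boundary is isotopic to the identity, $\mu$ is determined up to isotopy rel $\partial P$ by its action on this cut system.

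The heart of the argument is to show that Steps 2, 3, and 4 realize the transport of this cut system across one sector each. The $3$-dimensional compression body $X_1 \cap X_2$ has its two horizontal faces identified with $\Sigma_\alpha$ and $\Sigma_\beta$, and the compression body identification $\Sigma_\alpha \cong \Sigma_\beta$ is precisely the gluing between the $X_1$- and $X_2$-pages. An arc system on $\Sigma_\alpha$ can be transported across this compression body to an arc system on $\Sigma_\beta$ exactly when it admits a single representative on $\Sigma$ disjoint from both $\alpha$ and (a handle-slide representative of) $\beta$, which is precisely the content of Step 2; the transported arcs are then $\phi_\beta(a_1) \subset \Sigma_\beta \cong P$. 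Steps 3 and 4 play the analogous role for the compression bodies $X_2 \cap X_3$ and $X_3 \cap X_1$, and Step 5 returns the tracked arcs to a representative $a_*$ disjoint from the original $\alpha$, so that $\phi_\alpha(a_*)$ can be compared directly to $\phi_\alpha(a)$ on the original $P = \Sigma_\alpha$. Step 6 identifies the resulting map as $\mu$, and independence from all intermediate choices is automatic from the cut-system uniqueness observation above.

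The main obstacle will be justifying the parenthetical claims: that in Step 2 one need not slide $\alpha$ curves over $\alpha$ curves, and analogously in Steps 3 and 4. These reflect the defining hypothesis of a relative trisection diagram, namely that each of $(\Sigma,\alpha,\beta)$, $(\Sigma,\beta,\gamma)$, $(\Sigma,\gamma,\alpha)$ is diffeomorphism and handle slide equivalent to the standard pair $(\Sigma,\delta,\epsilon)$ of Figure~\ref{F:StandardForPairs}, in which one can exhibit by inspection an explicit family of arcs disjoint from both curve systems. The task then reduces to showing that any arc system $a$ disjoint from $\alpha$ and cutting $\Sigma_\alpha$ into a disk can be brought into such a standard position using only arc-slides over $\alpha$ and handle-slides of $\beta$ over $\beta$. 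I expect this to be the most delicate step; in essence it asserts that complete arc systems on a surface are unique up to slides over a fixed complete system of disjoint curves, and the analysis must be carried out compatibly with the second curve system. Granting this, the remaining cyclic steps follow by the same argument.
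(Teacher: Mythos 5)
Your overall strategy---transporting a cut system of arcs for the page around the three sectors, using representatives disjoint from consecutive curve systems, and reading the monodromy off from where the arcs return---is the same as the paper's. The genuine gap is your assertion that ``independence from all intermediate choices is automatic from the cut-system uniqueness observation.'' That observation (the Alexander method) only says that a diffeomorphism of $P$ is determined up to isotopy \emph{once the isotopy class of the image of the cut system is known}; it does not show that different choices of $a_1,\beta',a_2,\gamma',a_3,\alpha',a_*$ lead to isotopic final systems $\phi_\alpha(a_*)$, nor that the identification $\Sigma_\alpha\to\Sigma_{\beta'}$ you define by one particular arc system disjoint from both $\alpha$ and $\beta'$ agrees with the geometric identification of the two pages coming from the trisection. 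A priori, two different arc systems disjoint from $\alpha$ and $\beta'$ could induce different identifications $\Sigma_\alpha\cong\Sigma_{\beta'}$, and then the algorithm would not be well defined---and well-definedness is explicitly part of the theorem's content. The paper supplies exactly this missing ingredient with its main technical lemma and \autoref{C:UniqueIXP}: any self-diffeomorphism of the model sutured manifold $([-1,1]\times P)\,\#\,(\#^l S^1\times S^2)$ that fixes $\Gamma\cup P^-$ is isotopic to the identity on $P^+$, so there is a canonical ``identity'' map between the two pages of each $\overline{\partial X_i\setminus\partial X}$, and every arc-based identification computes that canonical map. The proof of that lemma uses the specific topology of the model (the swept-out curves bound disks, hence are null-homotopic in $[-1,1]\times P$, hence the arcs are homotopic rel endpoints and, by Baer's theorem, isotopic); nothing of this sort appears in your argument, and disk-cutting on the page alone cannot replace it.

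Two smaller points. First, you correctly flag the existence claims of Steps 2--4 (and the parenthetical restrictions on which slides are needed) as delicate but leave them unproved; the paper dispatches these from the hypothesis that each pair $(\Sigma,\alpha,\beta)$, $(\Sigma,\beta,\gamma)$, $(\Sigma,\gamma,\alpha)$ is slide-and-diffeomorphism standard, so this is a lesser issue than the missing uniqueness argument, but it still needs to be said. Second, $X_1\cap X_2$ is a relative compression body from $\Sigma$ to $\Sigma_\alpha$; the object whose two horizontal faces are the pages $\Sigma_\alpha$ and $\Sigma_\beta$ is the sutured manifold $\overline{\partial X_2\setminus\partial X}=(X_2\cap X_1)\cup(X_2\cap X_3)$, which is $([-1,1]\times P)\,\#\,(\#^l S^1\times S^2)$ rather than a product---and the presence of the $S^1\times S^2$ summands is precisely why the canonicity of the page identification requires the lemma instead of being a tautology about a product structure.
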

  
 Of course there are choices in the above algorithm each time we perform handleslides to arrange disjointness from the next system of curves, but part of the content of the theorem is that the resulting $\mu$ is independent of these choices.
 
 Note that this, together with the existence of trisections relative to given open books~\cite{GKTrisections}, gives us a purely $2$--dimensional result, namely that there is a way to encode mapping classes of surfaces with boundary via trisection diagrams (on higher genus surfaces).
 
 An alternative definition of a relative trisection diagram includes both the systems of curves $\alpha$, $\beta$ and $\gamma$ and the systems of arcs $a_1$, $a_2$, $a_3$; from such a definition it is easier to see that a diagram determines a trisected $4$--manifold. The nontriviality of both \autoref{T:DiagramsForTrisections} and \autoref{T:DiagramGivesMonodromy} is that one does not in fact need the arcs to uniquely determine the $4$--manifold and the open book on its boundary.

\section{Trisections of Closed Manifolds and their diagrams}
Let $Z_k = \natural^k (S^1 \times B^3)$ with $Y_k = \partial Z_k = \#^k (S^1 \times S^2)$. Given an integer $g \geq k$, let $Y_k = Y_{g,k}^- \cup Y_{g,k}^+$ be the standard genus $g$ Heegaard splitting of $Y_k$ obtained by stabilizing the standard genus $k$ Heegaard splitting $g-k$ times. 
\begin{definition} \label{D:Trisection}
A $(g,k)$--trisection of a closed, connected, oriented $4$--manifold $X$ is a decomposition of $X$ into three submanifolds $X = X_1 \cup X_2 \cup X_3$ satisfying the following properties:
\begin{enumerate}
 \item For each $i = 1,2,3$, there is a diffeomorphism $\phi_i : X_i \to Z_k$.
 \item For each $i = 1,2,3$, taking indices mod $3$, $\phi_i(X_i \cap
   X_{i+1}) = Y_{g,k}^-$ and $\phi_i(X_i \cap X_{i-1}) = Y_{g,k}^+$.
\end{enumerate}
\end{definition}

\begin{theorem}[Gay-Kirby~\cite{GKTrisections}]
 Every smooth closed oriented connected $4$--manifold has a trisection.
\end{theorem}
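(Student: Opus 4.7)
The plan is to encode the trisection as a generic smooth map $G : X \to D^2$, with the central fiber $G^{-1}(0)$ playing the role of the central surface $\Sigma = X_1 \cap X_2 \cap X_3$ and the preimages of three closed $2\pi/3$--sectors giving $X_1, X_2, X_3$. The task then reduces to building $G$ so that (a) each sector preimage is a $4$--dimensional $1$--handlebody of the form $Z_k$, and (b) the preimages of the three radial edges, i.e.\ the pairwise intersections $X_i \cap X_{i\pm 1}$, are genus-$g$ handlebodies forming a standard stabilized Heegaard splitting of $\#^k(S^1 \times S^2) = Y_k$.

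The starting data is a self-indexing Morse function $f : X \to [0,4]$ with a unique index-$0$ and a unique index-$4$ critical point. By introducing birth--death pairs we may further arrange that there are $k$ critical points of index $1$ and $k$ of index $3$ for some common $k$. Then $X_- := f^{-1}([0, 3/2]) \cong Z_k$, and, dually, $X_+ := f^{-1}([5/2, 4]) \cong Z_k$, each bounded by a copy of $Y_k$ carrying its standard genus-$k$ Heegaard splitting. The middle cobordism $W := f^{-1}([3/2, 5/2])$ is built entirely from $2$--handles attached along a framed link $L \subset \partial X_-$, with dual (belt) link $L^* \subset \partial X_+$; what remains is to bisect $W$ into two further copies of $Z_k$ so that, together with $X_\pm$, we obtain the three trisection pieces.

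The crucial step is to stabilize both Heegaard surfaces $\partial X_\pm$ and to perform handle slides of $L, L^*$ until both links can be simultaneously isotoped onto a common genus-$g$ surface $\Sigma \subset W$ that is a Heegaard surface from each side of $W$, with $L$ bounding disks on one side and $L^*$ on the other. Given this, one defines on a collar of $\Sigma$ a generic auxiliary function rotating the $2$--handle cores into a third radial sector of $D^2$, producing the desired Morse $2$--function $G$. The main obstacle is exactly this simultaneous stabilization: one needs a Reidemeister--Singer--style result strong enough to arrange the $L$ and $L^*$ data compatibly on a single surface, which is the heart of the argument in \cite{GKTrisections}. Once the configuration is in place, verification of \autoref{D:Trisection} is immediate: by construction each sector preimage admits only $0$-- and $1$--handles, and each pairwise intersection is a standard stabilization of the genus-$k$ Heegaard splitting of $Y_k$.
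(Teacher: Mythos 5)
The paper does not prove this statement; it is quoted from~\cite{GKTrisections}, so the only meaningful comparison is with Gay--Kirby's own argument, which your sketch is clearly modeled on. Measured against that, your proposal has a genuine gap: the step you label ``the main obstacle'' --- stabilizing the Heegaard surfaces and sliding/isotoping the attaching link $L$ of the $2$--handles (and its dual data) so that it lies on a single genus--$g$ Heegaard surface with surface framing --- is precisely the content of the theorem. You do not supply an argument for it; you defer it to \cite{GKTrisections}, i.e.\ to the very result being proven. In Gay--Kirby this is done concretely (project $L$ to a Heegaard surface of $\partial(X_-)$, resolve crossings and correct framings by stabilizing the surface, one stabilization per crossing/framing defect), and without some version of that argument the proof has no core. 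Merely invoking a ``Reidemeister--Singer--style result'' is not enough, since Reidemeister--Singer compares two Heegaard splittings of the same $3$--manifold and says nothing about placing a framed link on a splitting surface.

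There are also structural problems with the decomposition as you set it up. You take $X_-=f^{-1}([0,3/2])$ and $X_+=f^{-1}([5/2,4])$ as two sectors and propose to ``bisect $W$ into two further copies of $Z_k$,'' which yields four pieces, not three; moreover $X_-\cap X_+=\emptyset$, so these two cannot both be sectors of a trisection, in which every pair of sectors must meet in a genus--$g$ handlebody containing the central surface. In the actual construction one sector is the $0$-- and $1$--handles, a second is a thickened Heegaard handlebody of $\partial X_-$ together with the $2$--handles, and the third is the remaining part of the middle level \emph{together with} the $3$-- and $4$--handles. Finally, arranging equal numbers of index--$1$ and index--$3$ critical points does not make the middle sector a copy of the same $Z_k$: in \autoref{D:Trisection} all three pieces must be $\natural^k(S^1\times B^3)$ for a common $k$, and the balancing (and the verification that the $2$--handle sector is a $1$--handlebody at all, which uses exactly the surface-framing condition) needs an explicit argument that your sketch omits.
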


\begin{definition} \label{D:TrisectionDiagram}
 A {\em $(g,k)$--trisection diagram} is a tuple $(\Sigma, \alpha, \beta, \gamma)$ such that $\Sigma$ is a closed oriented surface of genus $g$ and each triple $(\Sigma,\alpha,\beta)$, $(\Sigma,\beta,\gamma)$ and $(\Sigma,\gamma,\alpha)$ is diffeomorphism and handle slide equivalent to the triple $(\Sigma,\delta,\epsilon)$ shown in \autoref{F:StandardForPairsClosed}.
\end{definition}

\begin{figure}[h]
\centering
\def\svgwidth{0.8\textwidth}
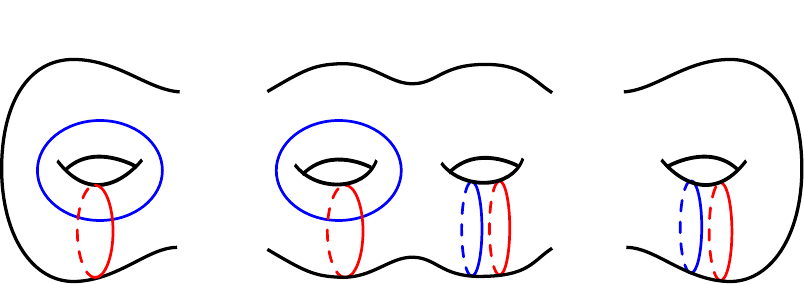
 \caption{The standard model $(\Sigma,\delta,\epsilon)$ in the closed case.}\label{F:StandardForPairsClosed} 
 \end{figure}

The following result is straightforward, and we present the proof here only to set the stage for the more subtle relative case.

\begin{theorem}[Gay-Kirby~\cite{GKTrisections}] \label{T:ClosedDiagrams} For every $(g,k)$--trisection diagram $(\Sigma,\alpha,\beta,\gamma)$ there is a unique (up to diffeomorphism) closed trisected $4$--manifold $X = X_1 \cup X_2 \cup X_3$ such that, with respect to a fixed identification $\Sigma \cong X_1 \cap X_2 \cap X_3$, the $\alpha$, $\beta$ and $\gamma$ curves, respectively, bound disks in $X_1 \cap X_2$, $X_2 \cap X_3$ and $X_3 \cap X_1$. Furthermore, any closed trisected $4$--manifold is determined in this way by some trisection diagram, and any two trisection diagrams for the same $4$--manifold trisection are diffeomorphism and handle slide equivalent.
\end{theorem}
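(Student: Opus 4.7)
My plan for \autoref{T:ClosedDiagrams} has three parts: construct the trisected $4$--manifold from a diagram, verify its uniqueness via Laudenbach--Poenaru, then reverse the construction.

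For construction, from $(\Sigma,\alpha,\beta,\gamma)$ I first assemble three genus $g$ handlebodies $H_\alpha$, $H_\beta$, $H_\gamma$. Each $H_\alpha$ is obtained by attaching $2$--handles to $\Sigma\times[0,1]$ along $\alpha\times\{1\}$ and capping the resulting $S^2$ boundary components with $3$--balls; this is well defined up to diffeomorphism rel $\Sigma$ since handle slides among $\alpha$ curves do not change $H_\alpha$. The hypothesis on the three pairs tells me that each of $H_\alpha\cup_\Sigma H_\beta$, $H_\beta\cup_\Sigma H_\gamma$, $H_\gamma\cup_\Sigma H_\alpha$ is diffeomorphic to $Y_k=\#^k(S^1\times S^2)$. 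Next, I thicken $\Sigma\cup H_\alpha\cup H_\beta\cup H_\gamma$ to a compact $4$--manifold $W$ in which the three handlebodies meet along $\Sigma$ in a tripod pattern; its boundary consists of three copies of $Y_k$. Capping each boundary component with a copy of $Z_k=\natural^k(S^1\times B^3)$ gives a closed $4$--manifold $X=X_1\cup X_2\cup X_3$ satisfying \autoref{D:Trisection}, with the $\alpha,\beta,\gamma$ curves bounding disks in $X_1\cap X_2$, $X_2\cap X_3$, $X_3\cap X_1$ respectively.

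For uniqueness of $X$, the only freedom in the construction lies in the three gluing maps $\partial Z_k\to Y_k$. By the theorem of Laudenbach and Poenaru, every self-diffeomorphism of $\#^k(S^1\times S^2)$ extends over $\natural^k(S^1\times B^3)$, so the closed $4$--manifold is independent of these gluing choices up to diffeomorphism. This is the only genuinely nontrivial input and, I expect, the main obstacle for a reader not already familiar with it; everything else is standard $3$--manifold topology.

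For the converse, starting from a trisection $X=X_1\cup X_2\cup X_3$ with $\Sigma=X_1\cap X_2\cap X_3$, I pick cut systems of compressing disks in the three handlebodies $H_{ij}=X_i\cap X_j$ and take their boundaries on $\Sigma$ as $\alpha,\beta,\gamma$. The fact that $\partial X_i=H_{i-1,i}\cup_\Sigma H_{i,i+1}\cong Y_k$, together with Waldhausen's classification of Heegaard splittings of $\#^k(S^1\times S^2)$, ensures that each pair such as $(\Sigma,\alpha,\beta)$ is diffeomorphism and handle slide equivalent to $(\Sigma,\delta,\epsilon)$, so $(\Sigma,\alpha,\beta,\gamma)$ is a trisection diagram for $X$. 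Since any two cut systems of a fixed handlebody differ by isotopies and handle slides within the system, the diagram is well defined up to diffeomorphism and handle slide equivalence, completing the uniqueness half of the theorem.
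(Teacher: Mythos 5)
Your proposal is correct and follows essentially the same route as the paper: build the handlebodies $H_\alpha, H_\beta, H_\gamma$, thicken the spine (the paper phrases this as $B^2\times\Sigma$ with $I\times H_\alpha$, $I\times H_\beta$, $I\times H_\gamma$ attached along thirds of $S^1\times\Sigma$), observe that the pairwise-standard condition makes the three boundary components copies of $\#^k(S^1\times S^2)$, and cap off uniquely by Laudenbach--Poenaru; the converse likewise reduces to uniqueness of cut systems up to handle slides. The only cosmetic difference is that in the converse you invoke Waldhausen-type uniqueness of Heegaard splittings of $\#^k(S^1\times S^2)$, whereas the trisection structure already hands you the standard splitting via the diffeomorphisms $\phi_i$, so the paper only needs the cut-system fact (cited to Johannson).
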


\begin{proof} Note that the diagram in \autoref{F:StandardForPairsClosed} is a standard genus $g$ Heegaard diagram for $\#^k S^1 \times S^2 = Y_k$, describing the standard genus $g$ splitting $Y_k = Y_{g,k}^- \cup Y_{g,k}^+$. Fix an identification of $\Sigma$ with $Y_{g,k}^- \cap Y_{g,k}^+$ such that the $\delta$ curves bound disks in $Y_{g,k}^-$ and the $\epsilon$ curves bound disks in $Y_{g,k}^+$.
 
Given a trisected $4$--manifold $X=X_1 \cup X_2 \cup X_3$, let $\phi_i : X_i \to Z_k$, for $i=1,2,3$, be the diffeomorphisms from \autoref{D:Trisection}. The associated diagram is then $(X_1 \cap X_2 \cap X_3, \phi_1^{-1}(\delta), \phi_2^{-1}(\delta), \phi_3^{-1}(\delta))$. Equivalently one could replace any $\phi_i^{-1}(\delta)$ with $\phi_{i+1}^{-1}(\epsilon)$, or in fact any other cut system of $g$ curves bounding disks in $X_i \cap X_{i+1}$; the resulting diagrams would be handle slide equivalent~\cite{Johannson}.
 
Conversely, given a trisection diagram $(\Sigma,\alpha,\beta,\gamma)$, let $H_\alpha$, $H_\beta$ and $H_\gamma$, resp., be handlebodies bounded by $\Sigma$ determined by $\alpha$, $\beta$ and $\gamma$, resp. Then build $X$ by starting with $B^2 \times \Sigma$, attaching $I \times H_\alpha$, $I \times H_\beta$ and $I \times H_\gamma$ to $\partial B^2 \times \Sigma = S^1 \times F_g$ along successive arcs in $S^1$ crossed with $\Sigma$. This produces a $4$--manifold with three boundary components, but because each pair of systems of curves is a Heegaard diagram for $\#^k S^1 \times S^2$, each boundary component is diffeomorphic to $\#^k S^1 \times S^2$, and hence can be capped off uniquely with $\natural^k S^1 \times B^3$~\cite{LaudenbachPoenaru}.
\end{proof}

\section{Relative trisections}
Here we rephrase the definition of relative trisection from~\cite{GKTrisections}. Given integers $(g,k;p,b)$ with $g \geq p$ and $g+p+b-1 \geq k \geq 2p+b-1$, we begin as in the closed case with $Z_k = \natural^k S^1 \times B^3$ and $Y_k = \partial Z_k = \#^k S^1 \times S^2$, but in this case we describe a certain decomposition of $Y_k$ as $Y_k = Y_{g,k;p,b}^- \cup Y_{g,k;p,b}^0 \cup Y_{g,k;p,b}^+$ needed for the definition. This decomposition is illustrated in \autoref{F:RelModel} as a lower dimensional analogue.

 \begin{figure}[h]
 \centering
{\tiny
\def\svgwidth{\textwidth}
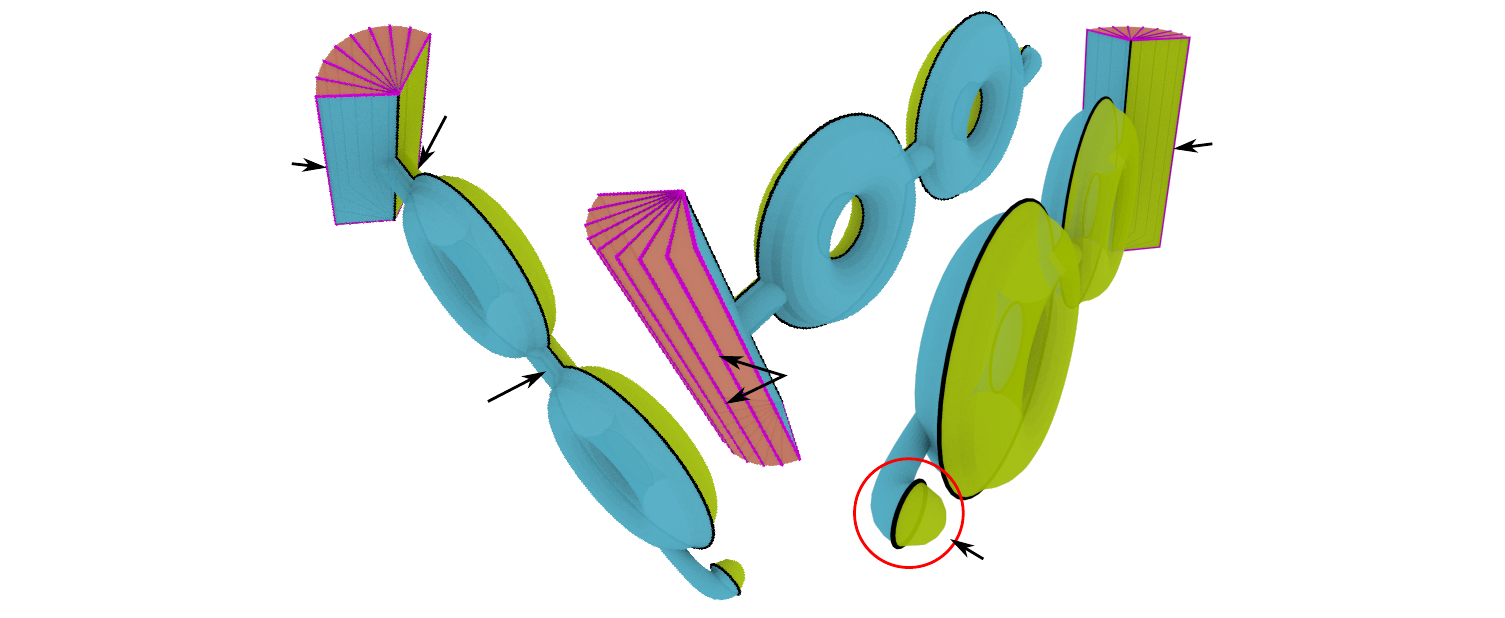 
}
 \caption{\label{F:RelModel} Several views of a lower dimensional analog of the standard model $Z_k$ for a sector of a relative trisection, with the decomposition of the boundary $Y_k = Y_{g,k;p,b}^- \cup Y_{g,k;p,b}^0 \cup Y_{g,k;p,b}^+$. The page $P$ is represented as a straight line segment, in purple. }
 \end{figure}

Let $D$ be a third of a unit 2-dimensional disk. Namely, use polar coordinates and set $D = \{(r,\theta) \mid r \in [0,1], \theta \in [-\pi/3,\pi/3]\}.$ Decompose $\partial D$ as $\partial D = \partial^- D \cup \partial^0 D \cup \partial^+ D$, where \begin{equation}\begin{aligned}&\partial^- D = \{r \in [0,1], \theta = -\pi/3\},\\ &\partial^0 D = \{ r = 1, \theta \in [-\pi/3,\pi/3]\}, \text{ and }\\ &\partial^+ D = \{r \in [0,1], \theta = \pi/3\}.\end{aligned}\label{cons::bdryD}\end{equation}

Now let $P$ be a compact surface of genus $p$ with $b$ boundary components and consider $U = D \times P$. Note that $U \cong \natural^{2p+b-1} S^1 \times B^3$ and that the decomposition \autoref{cons::bdryD} induces a decomposition of $\partial U$ as $$\partial U = \partial^- U \cup \partial^0 U \cup \partial^+ U,$$ where $\partial^\pm U = \partial^\pm D \times P$ and $\partial^0 U = (\partial^0 D \times P) \cup (D \times \partial P)$. Similarly, notice that if we regroup the sets involved in the decomposition of $\partial U$ into $\partial^- U \cup \partial^0 U$ and $\partial^+ U,$ we obtain the standard genus $2p+b-1$ Heegaard splitting of $\#^{2p+b-1} S^1 \times S^2$.

Next, decompose $\partial (S^1 \times B^3) = S^1 \times S^2$ as $\partial^- (S^1 \times B^3) \cup \partial^+ (S^1 \times B^3)$, where $\partial^\pm (S^1 \times B^3) = S^1 \times S^2_\pm$ and $S^2_\pm$ are the northern and southern hemispheres. This is the standard genus $1$ Heegaard splitting of $S^1 \times S^2$. For a positive integer $n$, let $V_n = \natural^{n} (S^1 \times B^3)$, with the boundary connect sums all occuring in neighborhoods of points in the Heegaard surface of each copy of $\partial (S^1 \times B^3)$, so that the induced decomposition $\partial V = \partial^- V \cup \partial^+ V$ is the standard genus $n$ Heegaard splitting of $\#^{n} (S^1 \times S^2)$. Now, given an integer $s \geq n$, let $\partial V_n = \partial^-_s V_n \cup \partial^+_s V_n$ be the result of stabilizing this Heegaard splitting exactly $s$ times. In what follows, to simplify notation, let $V=V_n$, where $n=k-2p-b+1$, and take $s$ to be $g-k+p+b-1$.

Finally, identify $Z_k$ with $U \natural V$, with the boundary connect sum connecting a neighborhood of a point in the interior of $\partial^- U \cap \partial^+ U$ with a neighborhood of a point in the Heegaard surface $\partial^-_s V \cap \partial^+_s V$. The induced decomposition of $Y_k = \partial Z_k$ is the advertised decomposition $Y_k = Y_{g,k;p,b}^- \cup Y_{g,k;p,b}^0 \cup Y_{g,k;p,b}^+$. To be more specific,\begin{equation}\begin{aligned}Y^\pm_{g,k;p,b} &= \partial^\pm U \natural \partial^\pm_s V\text{ and }\\Y^0_{g,k;p,b} &= \partial^0 U.\end{aligned}\label{cons::Y_k}\end{equation}

Before presenting the definition of a trisection relative to the boundary, we make a brief comment on the schematic representation of the stabilization in \autoref{F:RelModel}: The illustration shows a ``Heegaard splitting'' of a $2$--manifold, not a $3$--manifold, in which case ``stabilization'' corresponds to introducing a cancelling $0$--$1$--handle pair, or $1$--$2$--pair, depending on your perspective, and this is of course not as symmetric as stabilization in dimension $3$. In particular, the result is that one half of the splitting becomes disconnected while the other half remains connected. This is the best representation we can give when embedding the schematic in $\mathbb{R}^3$.

\begin{definition} \label{D:RelTrisection}
A {\em $(g,k;p,b)$--trisection} of a compact, connected, oriented $4$--manifold $X$ with connected boundary is a decomposition of $X$ into three submanifolds $X = X_1 \cup X_2 \cup X_3$ satisfying the following properties:
\begin{enumerate}
 \item For each $i = 1,2,3$, there is a diffeomorphism $\phi_i : X_i \to Z_k$.
 \item For each $i = 1,2,3$, taking indices mod $3$, $\phi_i(X_i \cap
   X_{i+1}) = Y_{g,k;p,b}^-$ and $\phi_i(X_i \cap X_{i-1}) = Y_{g,k;p,b}^+$, while $\phi_i(X_i \cap \partial X) = Y_{g,k;p,b}^0$.
\end{enumerate}
\end{definition}

\begin{lemma}
 A {\em $(g,k;p,b)$--trisection} of a compact, connected, oriented $4$--manifold $X$ with connected boundary induces an open book decomposition on $\partial X$ with pages of genus $p$ with $b$ boundary components.
\end{lemma}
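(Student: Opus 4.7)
The plan is to exhibit the open book on $\partial X$ explicitly, by assembling the three pieces $X_i \cap \partial X$ using the standard model for a sector. By \autoref{D:RelTrisection}, each $\phi_i$ identifies $X_i \cap \partial X$ with $Y^0_{g,k;p,b} = \partial^0 U$, where $U = D \times P$. From \autoref{cons::Y_k}, $\partial^0 U = (\partial^0 D \times P) \cup (D \times \partial P)$, glued along $\partial^0 D \times \partial P$. Since $\partial^0 U$ is disjoint from the $V$--summand of $Z_k = U \natural V$ (the connect-sum point lies in $\partial^- U \cap \partial^+ U$, not in $\partial^0 U$), the entire boundary $\partial X$ is assembled out of three copies of this local model.

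Next, I would track how the three sectors glue. A direct computation gives $\partial^- U \cap \partial^0 U = (\{p_-\} \times P) \cup (\partial^- D \times \partial P)$ with $p_- = (1,-\pi/3)$, and similarly $\partial^+ U \cap \partial^0 U$ with $p_+ = (1,\pi/3)$. Since three pie-slices $D$ glue along $\partial^\pm D$ to form a full disk $D^2$, with the $\partial^0 D$-arcs becoming $\partial D^2 = S^1$, the three copies of $\partial^0 D \times P$ assemble into a surface bundle $E$ over $S^1$ with fiber $P$, and the three copies of $D \times \partial P$ assemble into $\bigsqcup^b (D^2 \times S^1)$, a disjoint union of $b$ solid tori glued to $E$ along $\partial E = \bigsqcup^b T^2$.

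The final step is to recognize this decomposition of $\partial X$ as an open book: the binding $B$ is the union of the $b$ core circles $\{0\} \times S^1 \subset D^2 \times S^1$, its tubular neighborhood is $\bigsqcup^b (D^2 \times S^1)$, and the fibration $\partial X \setminus B \to S^1$ is given by projection to $\partial^0 D$ on $E$ and by the $D^2$-angular coordinate on each solid torus. The page is $P$, a surface of genus $p$ with $b$ boundary components, as required.

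The main obstacle I anticipate is verifying that the two local fibrations join smoothly across the shared interface $\bigsqcup^b T^2$; i.e., that the projection to $\partial^0 D$ on $E$ and the $D^2$-angular coordinate on the solid tori patch into a single smooth fibration over $S^1$. This amounts to checking orientation and smoothness at the corner loci $\{p_\pm\} \times P$ and $\partial^\pm D \times \partial P$, and should follow from the uniformity of the model (all three $\phi_i$ map to the \emph{same} standard $Z_k$), but this is where the essential bookkeeping lives.
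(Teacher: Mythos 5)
Your proposal is correct and follows essentially the same route as the paper: identifying each $X_i \cap \partial X$ with $Y^0_{g,k;p,b} = (\partial^0 D \times P) \cup (D \times \partial P)$ and observing that the three copies of $\partial^0 D \times P$ assemble into a $P$--bundle over $S^1$ while the three copies of $D \times \partial P$ form $B^2 \times \partial P$, the solid torus neighborhoods of the binding. Your extra bookkeeping about patching the fibrations along the tori is just a more explicit rendering of the same argument.
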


\begin{proof}
 Each $X_i \cap \partial X$ is diffeomorphic to $Y_{g,k;p,b}^0$, which is diffeomorphic to $([-\pi/3,\pi/3] \times P) \cup (D \times \partial P)$. These three pieces fit together to form $\partial X$ precisely so that the three copies of $[-\pi/3 \times \pi/3] \times P$ form a bundle over $S^1$ with fiber $P$, and so that the three copies of $D \times \partial P$ form a $B^2 \times \partial P$, a disjoint union of solid tori that fill the boundary components of the bundle as neighborhoods of the binding components of an open book.
\end{proof}

\begin{theorem}[Gay-Kirby~\cite{GKTrisections}] Every smooth, compact, oriented, connected $4$--manifold with connected boundary, with a fixed open book decomposition on the boundary, has a trisection inducing the given open book.
\end{theorem}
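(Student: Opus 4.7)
The plan is to construct a Morse $2$-function $f : X \to D^2$ whose behavior near $\partial X$ is dictated by the given open book, and then to homotope $f$ through generic Morse $2$-functions until its critical image lies in a standard configuration that exhibits the desired trisection. This follows the philosophy of~\cite{GKTrisections} and leans on the calculus of indefinite generic maps.

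First I would build $f$ on a collar $\partial X \times [0,1] \hookrightarrow X$. Writing the open book as $(P,\phi)$, there is a natural map $\partial X \to D^2$ that sends the binding to the origin and each page $P_\theta$ to the radial arc at angle $\theta$; thickening the radial direction along the collar promotes this to a map onto an annular neighborhood of $\partial D^2$ that is a $P$-bundle over the interior of the annulus and has the binding circles as the singular set over the center. Now extend $f$ across $X$ in any smooth way and perturb it generically, keeping the collar fixed, so that on the interior it becomes a Morse $2$-function with only fold and cusp singularities.

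Next I would apply the standard moves on Morse $2$-functions (introducing and cancelling fold--cusp pairs, swallowtail moves, eye moves, merges, and flips) to bring the critical image in the interior into standard position. The target configuration consists of three arcs emanating inward from $\partial D^2$ at $\theta = -\pi/3$, $\theta = \pi/3$, and $\theta = \pi$, dividing $D^2$ into three sectors, together with a symmetric collection of indefinite folds in each sector chosen so that the preimage of each closed sector is diffeomorphic to $Z_k = \natural^k(S^1 \times B^3)$ and so that the decomposition of $\partial Z_k$ it induces matches $Y^-_{g,k;p,b} \cup Y^0_{g,k;p,b} \cup Y^+_{g,k;p,b}$ as constructed in Section~3. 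Balancing across the three sectors, and upgrading a splitting that is unbalanced into a genuine $(g,k;p,b)$--trisection, is accomplished by trisection-style stabilizations that introduce cancelling fold pairs symmetrically in the three sectors while leaving the collar untouched.

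The main obstacle is this last standardization step: one must control the Morse $2$-function moves in the interior while holding the collar fibration rigidly fixed, and one must realize the boundary-sum and stabilization operations of Section~3 inside the calculus of moves. Concretely, this requires a relative version of the indefinite $2$-function machinery in which all moves are supported in the interior of $X$, together with a relative stabilization operation that increases $g$ and $k$ in lockstep without altering the open book on $\partial X$. Once those are in place, unwinding the models in \autoref{cons::bdryD}--\autoref{cons::Y_k} identifies each sector's preimage with $Z_k$ and the pairwise intersections with the prescribed pieces, verifying \autoref{D:RelTrisection}, while the lemma just above guarantees that the induced open book on $\partial X$ is the one we started with.
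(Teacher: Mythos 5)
First, note that this paper does not actually prove the statement: it is imported from Gay--Kirby~\cite{GKTrisections}, so there is no in-paper proof to compare against. Your outline does follow the philosophy of the original Gay--Kirby argument (a Morse $2$--function $X \to D^2$ with boundary behavior prescribed by the open book, then homotoped rel the boundary data into a standard configuration whose three sector preimages give the trisection). But as written it has a genuine gap, and you essentially name it yourself: the entire mathematical content of the theorem is the ``standardization step'' that you defer. Saying that the standard moves can ``bring the critical image into standard position'' while the collar is held rigid, and that the folds can be ``chosen so that the preimage of each closed sector is diffeomorphic to $Z_k = \natural^k(S^1\times B^3)$'' with boundary decomposition matching $Y^-_{g,k;p,b}\cup Y^0_{g,k;p,b}\cup Y^+_{g,k;p,b}$, is asserting the conclusion rather than proving it. To close this you must (i) invoke or prove the relative versions of the indefinite, fiber-connected Morse $2$--function existence and move theorems (supported away from the fixed collar), (ii) give the argument that a sector preimage with the resulting fold pattern is a $4$--dimensional $1$--handlebody and that the pairwise intersections are the prescribed relative compression bodies, and (iii) show the three sectors can be balanced to a common $k$ by stabilizations that do not disturb the boundary. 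None of these is routine, and they are exactly what the proof in~\cite{GKTrisections} supplies.

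Two smaller points. Your local model near $\partial X$ is garbled: if the binding maps to the origin and each page maps onto a radial arc, then $\partial X$ (and hence any collar of it) maps onto all of $D^2$, not onto an annular neighborhood of $\partial D^2$; the correct picture is the Lefschetz-fibration-type boundary behavior in which the mapping-torus part of $\partial X$ lies over a neighborhood of $\partial D^2$ while the binding solid tori lie over the whole disk, with the binding over the center (this is exactly the decomposition $Y^0 = (\partial^0 D \times P)\cup(D\times\partial P)$ of Section~3). Also, the unnumbered lemma in Section~3 only guarantees that a relative trisection induces \emph{some} open book with pages of genus $p$ and $b$ binding components; it does not identify that open book with the one you started from. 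That identification must come from the fact that your homotopies never alter the map on the collar, including the monodromy, and should be stated as such rather than attributed to the lemma.
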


\section{Relative trisections and sutured $3$--manifolds, and proofs of the main theorems}

In this section we make several observations about our model $(Z_k,Y_k)$. These observations will help us analyze the topology of the corresponding pieces of a relative trisection $X=X_1 \cup X_2 \cup X_3$ and will allow us to identify these spaces with more familiar ones.\\

\begin{enumerate}
 \item The intersection $Y_{g,k;p,b}^- \cap Y_{g,k;p,b}^+$, and hence the triple intersection $X_1 \cap X_2 \cap X_3$, is a surface of genus $g$ with $b$ boundary components. This is schematically illustrated in \autoref{F:RelModel} as a black $1$--manifold, see \autoref{F:SurfacesInModel}.
\begin{figure}[h!]
\centering
\def\svgwidth{0.9\textwidth}
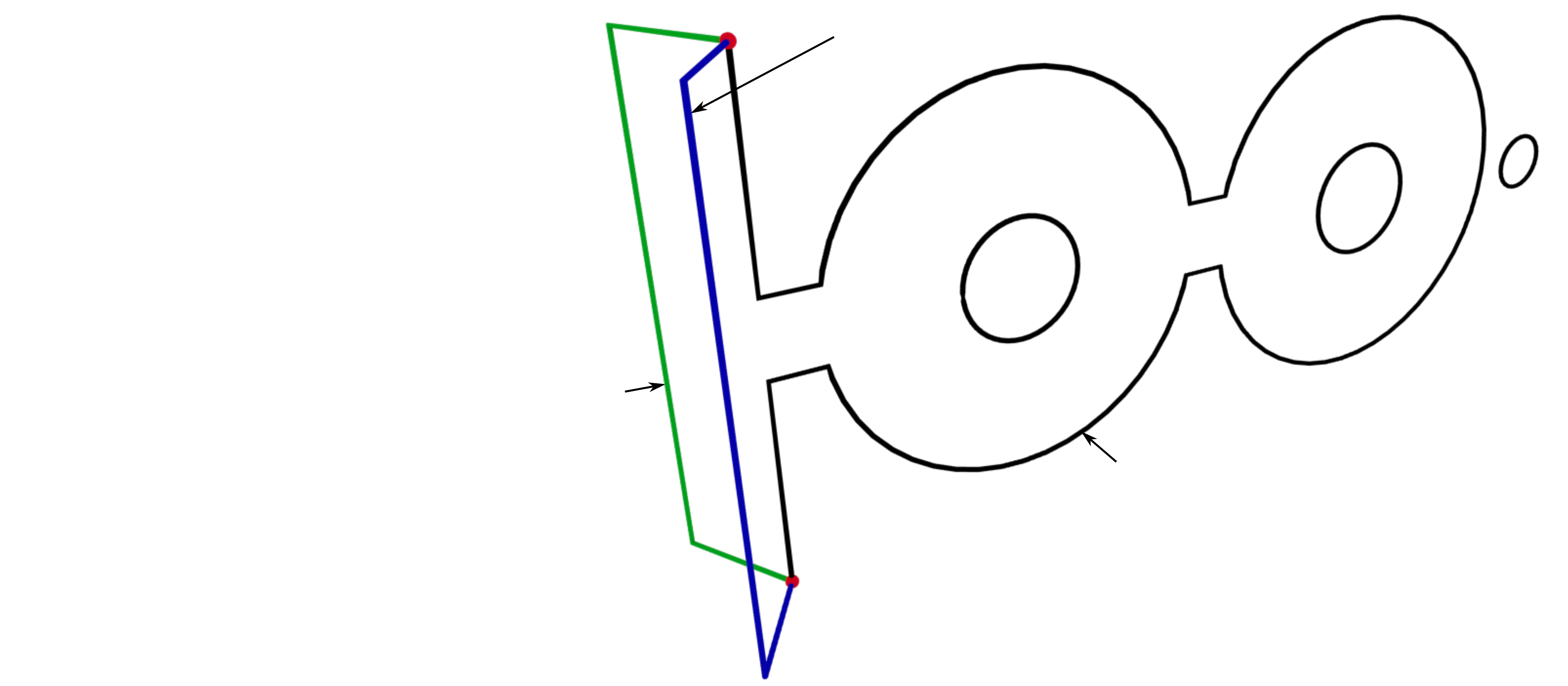
\caption{\label{F:SurfacesInModel} Three ``surfaces'' in the standard model, as represented in the lower dimensional schematic. Their common intersection, here shown as a red $S^0$, is really a disjoint union of $b$ copies of the circle $S^1$.}
\end{figure}
\item The intersection $Y_{g,k;p,b}^\pm \cap Y_{g,k;p,b}^0$, and hence $X_i \cap X_{i\mp 1} \cap \partial X$, is a surface of genus $p$ with $b$ boundary components, and so diffeomorphic to $P$. For $i=1,2,3$, these become three pages of the induced open book decomposition of $\partial X$. In \autoref{F:RelModel}, these appear as the two pink ends of the ``fan'' of pages; \autoref{F:SurfacesInModel} isolates the schematic representations of these two surfaces.
 \item The 3--dimensional triple intersection $Y_{g,k;p,b}^- \cap Y_{g,k;p,b}^0 \cap Y_{g,k;p,b}^+$, and hence the 4--dimensional intersection $X_1 \cap X_2 \cap X_3 \cap \partial X$, is a disjoint union of $b$ circles. These circles are precisely the components of $\partial P$, and as such, the binding of the induced open book. This appears schematically in \autoref{F:SurfacesInModel} as a red pair of points.
 \item Both $Y_{g,k;p,b}^-$ and $Y_{g,k;p,b}^+$, and hence $X_i \cap X_{i\pm 1}$, are $3$--dimensional relative compression bodies starting from a surface $\Sigma$ of genus $g$ with $b$ boundary components and compressing along $g-p$ disjoint simple closed curves to get to a surface $P$ of genus $p$ with $b$ boundary components. Here, by ``relative compression body'', we mean a cobordism with sides from a high genus surface at the bottom to a low genus surface at the top, each with the same number of boundary components, with a Morse function with critical points only of index $2$. The schematic representations of these two relative compression bodies are illustrated side by side in \autoref{F:CompressionBodies}.  
\item The union $Y_{g,k;p,b}^- \cup Y_{g,k;p,b}^+ = \overline{Y_k \setminus Y_{g,k;p,b}^0}$, and hence each $\overline{\partial X_i \setminus \partial X}$, is a {\em balanced sutured $3$--manifold}, with suture equal to a disjoint union of annuli described, in the explicit construction of $(Z_k,Y_k)$ described in \autoref{cons::Y_k}, as $\{r \in [0,1], \theta = \pm \pi/3\} \times \partial P$ with the first factor as in \autoref{cons::bdryD}. Thus each $\overline{\partial X_i \setminus \partial X}$ is a {\em balanced sutured $3$--manifold}, with suture $\Gamma$ equal to a regular neighborhood  in $\partial (\overline{\partial X_i \setminus \partial X})$ of the binding. The suture divides the boundary into two remaining pieces $P^-$ and $P^+$ which, in our case, are, respectively, $\{-\pi/3\} \times P$ and $\{\pi/3\} \times P$.  See \autoref{F:SuturedView}. Note that, in this paper, annular sutures of a sutured manifold are considered to be {\em parametrized annuli}, i.e. parametrized as $[-1,1] \times \partial P^-$.
 \item In fact the sutured manifold $Y_{g,k;p,b}^- \cup Y_{g,k;p,b}^+ = \overline{Y_k \setminus Y_{g,k;p,b}^0}$, and hence each $\overline{\partial X_i \setminus \partial X}=(X_i\cap X_{i-1})\cup(X_i\cap X_{i+1}),$ is diffeomorphic to $$([-1,1] \times P) \# (\#^{k-2p-b+1} S^1 \times S^2),$$ with suture $\Gamma = [-1,1] \times \partial P$ and boundary pieces $P^\pm = \{\pm 1\} \times P$. The decomposition as $Y_{g,k;p,b}^+ \cup Y_{g,k;p,b}^-$ is the connected sum of the decomposition of $[-1,1] \times P$ as $([-1,0] \times P) \cup ([0,1] \times P)$ with a $(g-k+b-1)$--times stabilized standard Heegaard splitting of $\#^{k-2p-b+1} S^1 \times S^2$. This gives a standard genus $g$ {\em sutured Heegaard splitting} of $\overline{\partial X_i \setminus \partial X}$.
 \item There is a diffeomorphism between the surface $\Sigma$ in \autoref{F:StandardForPairs} and $Y_{g,k;p,b}^+ \cap Y_{g,k;p,b}^-$ such that the $\delta$ curves in \autoref{F:StandardForPairs} bound disks in $Y_{g,k;p,b}^-$ and the $\epsilon$ curves in \autoref{F:StandardForPairs} bound disks in $Y_{g,k;p,b}^+$. Thus $(\Sigma,\delta,\epsilon)$ is a {\em sutured Heegaard diagram} for $Y_{g,k;p,b}^+ \cup Y_{g,k;p,b}^- = \overline{Y_k \setminus Y_{g,k;p,b}^0}$. (A sutured Heegaard diagram is a triple $(\Sigma, \delta,\epsilon)$ such that $\Sigma$ is a surface with boundary and each of $\delta$ and $\epsilon$ is a nonseparating collection of simple closed curves in $\Sigma$; such a diagram determines a sutured $3$--manifold, balanced if $|\delta| = |\epsilon|$.)
\end{enumerate}

 \begin{figure}[h]
  \begin{subfigure}{\textwidth}
 \centering
 \includegraphics[width=.8\textwidth]{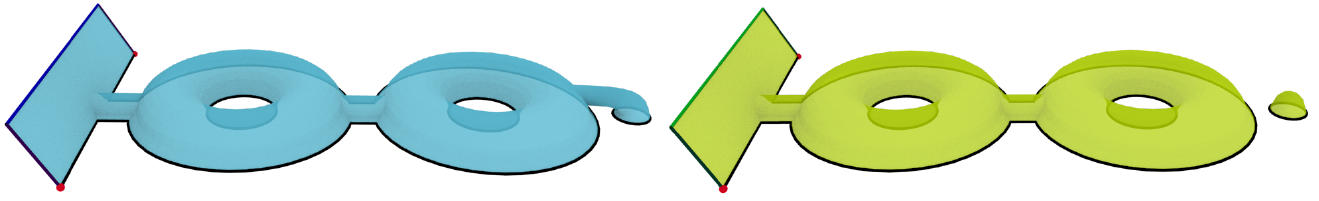}
 \caption{\label{F:CompressionBodies} The two relative compression bodies $Y_{g,k;p,b}^-$ and $Y_{g,k;p,b}^+$, each shown with the high genus ``surface'' $\Sigma$ on the bottom, the sides of the cobordism, slanted up and to the left, and the low genus ``page'' $P$ on the top.}
 \end{subfigure}
\begin{subfigure}{\textwidth}
 \centering
 \centering
\def\svgwidth{0.4\textwidth}
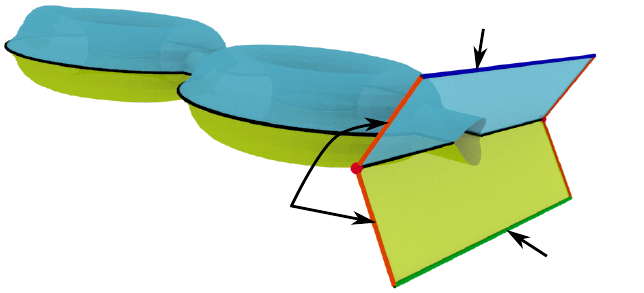
 \caption{\label{F:SuturedView} The two relative compression bodies fit together to form a sutured $3$--manifold, depicted here with ``sutures'' vertical and bent at a $2\pi/3$ angle along the core binding.}
 \end{subfigure}
 \caption{Diagrams concerning relative compression bodies.}
 \end{figure}

Notice that the decomposition of $Y_{g,k;p,b}$ into $Y_{g,k;p,b}^- \cup Y_{g,k;p,b}^0\cup Y_{g,k;p,b}^+$ can be modified into a decomposition with pieces $Y_{g,k;p,b}^- \cup Y_{g,k;p,b}^0$ and $Y_{g,k;p,b}^+$, by grouping together the first two pieces. This decomposition is the standard genus $k$ Heegaard splitting of $\#^{k} S^1\times S^2$ stabilized $g-k+p+b-1$ times. Notice also that $Y_{g,k;p,b}^0$ can be identified with a collar of the surface $P$ in $Y_{k,g+p+b-1}$. Thus, we can think of the space $Y_{g,k;p,b}^- \cup Y_{g,k;p,b}^+ = \overline{Y_k \setminus Y_{g,k;p,b}^0}$ as the complement of a surface with boundary in a Heegaard splitting and so it is only natural to expect arcs to be part of a notion of diagram for $Y_{g,k;p,b}^- \cup Y_{g,k;p,b}^+$. However, the last two observations indicate that it is possible to avoid the arcs. All this sets the stage for our main technical lemma.

\begin{lemma}
 Consider a diffeomorphism 
 $$\phi: ([-1,1] \times P) \# (\#^l S^1 \times S^2) \to ([-1,1] \times P) \# (\#^l S^1 \times S^2)$$ where the domain and range here are equipped with the sutured structure $\Gamma = [-1,1] \times \partial P$ and $P^\pm = \{\pm 1\} \times P$ discussed above. Suppose that $\phi|_{\Gamma \cup P^-} = \mathop{id}$. Then $\phi|_{P^+}$ is isotopic rel. boundary to the identity function $\mathop{id}: P^+ \to P^+$.
\end{lemma}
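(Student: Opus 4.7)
The plan is to show that $\phi|_{P^+}$ preserves the isotopy class, rel endpoints, of every properly embedded arc in $P^+$, and then to conclude via the Alexander method. Fix a cut system $a_1,\dotsc,a_n$ of properly embedded arcs on $P^+$ with endpoints on $\partial P^+$ such that $P^+\setminus(a_1\cup\dotsb\cup a_n)$ is a disk; it will suffice to show that each $\phi(a_i)$ is isotopic to $a_i$ in $P^+$ rel $\partial a_i$. For a single such arc $a$, I build a disk in $M$ bounded by $a\cup \phi(a)$ as follows. In the product summand $[-1,1]\times P$, consider the vertical rectangle $[-1,1]\times a$; after a small isotopy rel boundary pushing its interior off the ball used for the boundary connect sum with $\#^l(S^1\times S^2)$, this becomes a properly embedded disk $R_a\subset M$ whose boundary consists of four arcs: $\{1\}\times a\subset P^+$, $\{-1\}\times a\subset P^-$, and the two arcs $[-1,1]\times\{p\}\subset \Gamma$ for $p\in\partial a$. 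Since $\phi|_{P^-\cup\Gamma}=\mathop{id}$, the disk $\phi(R_a)$ shares the last three boundary arcs with $R_a$ and differs only on the $P^+$ side, where it carries $\phi(a)$. Gluing $R_a$ to $\phi(R_a)$ along the three shared arcs yields a map of a disk $E_a\to M$ whose boundary is the loop $a\cdot\phi(a)^{-1}\subset P^+$; hence this loop is null-homotopic in $M$.

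A van Kampen calculation for $M=([-1,1]\times P\setminus \mathring B^3)\cup_{S^2}(\#^l(S^1\times S^2)\setminus \mathring B^3)$ gives $\pi_1(M)\cong \pi_1(P)\ast F_l$, with $\pi_1(P^+)$ sitting as the $\pi_1(P)$ free factor; in particular the inclusion $P^+\hookrightarrow M$ is $\pi_1$-injective. Consequently $a\cdot\phi(a)^{-1}$ is null-homotopic in $P^+$, so $a$ and $\phi(a)$ are homotopic in $P^+$ rel $\partial a$, and by Epstein's classical theorem on arcs in surfaces they are therefore isotopic in $P^+$ rel $\partial a$.

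To finish, I apply the Alexander method: successively isotope $\phi|_{P^+}$ rel $\partial P^+$ to fix $a_1$ pointwise, then to fix $a_2$ while keeping $a_1$ fixed (rerunning the arc argument above inside the surface obtained by cutting $P^+$ along $a_1$), and so on; once $a_1,\dotsc,a_n$ are all fixed, the remaining diffeomorphism of the disk $P^+\setminus(a_1\cup\dotsb\cup a_n)$ fixes its boundary and hence is isotopic to the identity by Alexander's trick. I expect the main care to be needed in the cut-and-glue step producing $E_a$ (perturbing correctly and verifying well-definedness as an immersed disk) and in running the arc argument inside the successively cut surfaces during the inductive Alexander step; all remaining ingredients are standard.
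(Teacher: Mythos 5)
Your proposal is correct and follows essentially the same route as the paper: for each arc of a cut system, use the product rectangle over the arc together with $\phi|_{\Gamma\cup P^-}=\mathrm{id}$ to see that $a\cdot\phi(a)^{-1}$ is null-homotopic in $M$, push this into $P$ (the paper does this implicitly where you make the free-product/$\pi_1$-injectivity step explicit), invoke Baer--Epstein to upgrade homotopy of arcs to isotopy, and conclude with a cutting/Alexander-method argument. The extra explicitness in your van Kampen computation and in the final Alexander step only fills in details the paper leaves to the reader.
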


\begin{proof} To simplify notation, let $M=([-1,1] \times P) \# (\#^l S^1 \times S^2)$ and consider a properly embedded arc $a \subset P$; this gives rise to a simple closed curve $\gamma_a = (\{0\} \times a) \cup ([0,1] \times \partial a) \cup (\{1\} \times a) \subset \partial M$. Since $\phi|_{\Gamma \cup P^-} = \mathop{id}$, then $\phi(\gamma_a) = (\{0\} \times a) \cup ([0,1] \times \partial a) \cup (\{1\} \times a')$ for some other arc $a' \subset P$ with the same endpoints as $a$. Since $\gamma_a$ bounds a disk in $M$, so does $\phi(\gamma_a)$ and thus, in fact $\phi(\gamma_a)$ is homotopically trivial in $[-1,1] \times P$. Therefore the loop $\tau_a = a*(a')^{-1}$ obtained by concatenating $a$ and $(a')^{-1}$ is homotopically trivial in $P$. So $a$ and $a'$ are homotopic rel. endpoints, and thus by a result of Baer~\cite{Baer}, see~\cite[3.1]{Epstein}, $a$ and $a'$ are actually isotopic. Apply this to a collection of arcs cutting $P$ into a disk to conclude that $\phi|_{P^+}$ is isotopic rel. boundary to $\mathop{id}: P^+ \to P^+$.
\end{proof}

In what follows we use this lemma in the following form:

\begin{cor} \label{C:UniqueIXP}
 Consider the model sutured $3$--manifold $$(([-1,1] \times P) \# (\#^l S^1 \times S^2), \Gamma, P^-, P^+)$$ discussed above, and note that there is an ``identity'' map $\mathop{id} : P^- \to P^+$ defined by $\mathop{id}(-1,p) = (1,p)$. Given any sutured $3$--manifold $$(M,\Gamma_M,P^-_M,P^+_M)$$ diffeomorphic to $(([-1,1] \times P) \# (\#^l S^1 \times S^2), \Gamma, P^-, P^+)$ there is a unique (up to isotopy rel. boundary) diffeomorphism $\mathop{id}_M: P^-_M \to P^+_M$ such that, for any diffeomorphism $$\phi: (M,\Gamma_M,P^-_M,P^+_M) \to (([-1,1] \times P) \# (\#^l S^1 \times S^2), \Gamma, P^-, P^+),$$ we have $\mathop{id}_M = \phi^{-1} \circ \mathop{id} \circ \phi$.
\end{cor}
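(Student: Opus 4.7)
The plan is to reduce the claim to the preceding lemma by constructing an auxiliary sutured self-diffeomorphism of the model. Existence of $\mathop{id}_M$ is immediate: choose any sutured diffeomorphism $\phi: (M,\Gamma_M,P^-_M,P^+_M) \to (([-1,1] \times P) \# (\#^l S^1 \times S^2), \Gamma, P^-, P^+)$ and set $\mathop{id}_M = \phi^{-1} \circ \mathop{id} \circ \phi$. The real content is independence from the choice of $\phi$ up to isotopy rel. boundary. Given two such diffeomorphisms $\phi_1, \phi_2$, set $h = \phi_2 \circ \phi_1^{-1}$, a sutured self-diffeomorphism of the model. Since $\phi_2^{-1} \circ \mathop{id} \circ \phi_2 = \phi_1^{-1} \circ (h^{-1} \circ \mathop{id} \circ h) \circ \phi_1$, it suffices to prove that $h^{-1} \circ \mathop{id} \circ h$ is isotopic to $\mathop{id}: P^- \to P^+$ rel. boundary, equivalently that $h|_{P^+} \circ \mathop{id}$ is isotopic to $\mathop{id} \circ h|_{P^-}$ rel. $\partial P^-$.

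The strategy for producing this isotopy is to build a sutured self-diffeomorphism $k$ of the model that agrees with $h$ on $\Gamma \cup P^-$ but that, by construction, restricts to $\mathop{id} \circ h|_{P^-} \circ \mathop{id}^{-1}$ on $P^+$. Then $h \circ k^{-1}$ restricts to the identity on $\Gamma \cup P^-$, and the preceding lemma forces $(h \circ k^{-1})|_{P^+}$ to be isotopic rel. boundary to the identity. This yields $h|_{P^+}$ isotopic to $k|_{P^+} = \mathop{id} \circ h|_{P^-} \circ \mathop{id}^{-1}$, which is the desired conclusion.

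To construct $k$, let $f = h|_{P^-}$ and first isotope $f$ rel. $\partial P$ to be the identity on a small disk around the connect-sum basepoint $p_0 \in P$ where the $\#^l S^1 \times S^2$ summand is attached. Then the product formula $(t,p) \mapsto (t, f(p))$ defines a diffeomorphism of $[-1,1] \times P$ that is the identity near $(0, p_0)$ and so extends by the identity over the $\#^l S^1 \times S^2$ summand to a sutured self-diffeomorphism of $M$. Compatibility with $h$ on $\Gamma$ is automatic from the parametrized-suture convention, since $h|_\Gamma$ is then determined by its restriction to $\partial P^-$, namely $f|_{\partial P}$. Finally, an ambient isotopy in a collar of $P^-$, provided by isotopy extension, corrects this map so that its restriction to $P^-$ equals the original $f$ on the nose, without disturbing $\Gamma$ or altering $P^+$ up to isotopy rel. boundary. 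This produces the required $k$.

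The main obstacle is the $\#^l S^1 \times S^2$ summand in the construction of $k$: without it, the product formula alone would directly give a sutured self-diffeomorphism with all the required properties, but the presence of the summand forces the preliminary isotopy of $f$ near $p_0$ together with the subsequent isotopy-extension correction. Everything else, namely the parametrized-suture compatibility on $\Gamma$ and the final appeal to the preceding lemma, is routine once $k$ is in hand.
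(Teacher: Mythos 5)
Your proof is correct and takes the route the paper intends: the paper gives this corollary no separate argument, treating it as an immediate repackaging of the preceding lemma, and your construction of the product-type sutured self-diffeomorphism $k$ out of $h|_{P^-}$ so that the lemma applies to $h\circ k^{-1}$ is exactly the routine reduction being left implicit. One harmless imprecision: since your collar correction is supported near $P^-$, the map $k$ actually restricts on $P^+$ to $\mathop{id}\circ f'\circ \mathop{id}^{-1}$ for the adjusted map $f'$ (identity near the connect-sum point) rather than to $\mathop{id}\circ h|_{P^-}\circ \mathop{id}^{-1}$ on the nose, but as $f'$ is isotopic to $h|_{P^-}$ rel boundary this does not affect the conclusion.
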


We are finally ready to prove the main results of this paper, namely \autoref{T:DiagramsForTrisections} and \autoref{T:DiagramGivesMonodromy}. We include the statements of both theorems again to make it easier for the reader to follow our proofs.

\begin{reptheorem}{T:DiagramsForTrisections}
 For every $(g,k;p,b)$--trisection diagram $(\Sigma,\alpha,\beta,\gamma)$ there is a unique (up to diffeomorphism) trisected $4$--manifold with connected boundary $X = X_1 \cup X_2 \cup X_3$ such that, with respect to a fixed identification $\Sigma \cong X_1 \cap X_2 \cap X_3$, the $\alpha$, $\beta$ and $\gamma$ curves, respectively, bound disks in $X_1 \cap X_2$, $X_2 \cap X_3$ and $X_3 \cap X_1$. In particular, the open book decomposition on $\partial X$ has $b$ binding components and pages of genus $p$. Furthermore, any trisected $4$--manifold with connected boundary is determined in this way by some relative trisection diagram, and any two relative trisection diagrams for the same $4$--manifold trisection are diffeomorphism and handle slide equivalent.
\end{reptheorem}

\begin{proof}[Proof of \autoref{T:DiagramsForTrisections}]
 We parallel as much as possible the proof of \autoref{T:ClosedDiagrams}.\\
 
 As mentioned above, the diagram $(\Sigma,\delta,\epsilon)$ in \autoref{F:StandardForPairs} is a {\em sutured Heegaard diagram} for $Y_{g,k;p,b}^+ \cup Y_{g,k;p,b}^- = \overline{Y_k \setminus Y_{g,k;p,b}^0}$. Fix an identification of $\Sigma$ with $Y_{g,k;p,b}^- \cap Y_{g,k;p,b}^+$ such that the $\delta$ curves bound disks in $Y_{g,k;p,b}^-$ and the $\epsilon$ curves bound disks in $Y_{g,k;p,b}^+$.
 
 Given a trisected $4$--manifold $X=X_1 \cup X_2 \cup X_3$, for $i=1,2,3$, let $\phi_i : X_i \to Z_k$ be the diffeomorphisms from \autoref{D:RelTrisection}. As before, the associated diagram is then $(X_1 \cap X_2 \cap X_3, \phi_1^{-1}(\delta), \phi_2^{-1}(\delta), \phi_3^{-1}(\delta))$. Equivalently one could replace any $\phi_i^{-1}(\delta)$ with $\phi_{i+1}^{-1}(\epsilon)$, or in fact any other complete non-separating system of curves bounding disks in $X_i \cap X_{i+1}$; the resulting diagrams would again be handle slide equivalent~\cite{Johannson, CassonGordon}.
 
 Conversely, given a relative $(g,k;p,b)$--trisection diagram $(\Sigma,\alpha,\beta,\gamma)$, let $C_\alpha$, $C_\beta$ and $C_\gamma$, resp., be relative compression bodies built by starting with $I \times \Sigma$ and attaching $3$--dimensional $2$--handles along $\alpha$, $\beta$ and $\gamma$, resp. The boundary of $C_\alpha$, for example, is naturally identified with $\Sigma \cup (I \times \partial \Sigma) \cup \Sigma_\alpha$, where $\Sigma_\alpha$ is the result of surgery applied to $\Sigma$ along $\alpha$. Let $P = \Sigma_\alpha \cong \Sigma_\beta \cong \Sigma_\gamma$. 
 
 Build $X$ by starting with $B^2 \times \Sigma$, attaching $I \times C_\alpha$, $I \times C_\beta$ and $I \times C_\gamma$ to $\partial B^2 \times \Sigma = S^1 \times \Sigma$ along the product of successive arcs in $S^1$ with $\Sigma$. This produces a $4$--manifold with boundary naturally divided into $B^2 \times \Sigma$, three copies of $(I \times P) \cup (I \times I \times \partial P)$ and three sutured $3$--manifolds diffeomorphic to $(([-1,1] \times P) \# (\#^l S^1 \times S^2), \Gamma, P^-, P^+)$. The three sutured manifolds are as advertised because each of $(\Sigma,\alpha,\beta)$, $(\Sigma,\beta,\gamma)$ and $(\Sigma,\gamma,\alpha)$ is handle slide and diffeomorphism equivalent to the standard sutured Heegaard diagram $(\Sigma,\delta,\epsilon)$ discussed above. This is illustrated in \autoref{F:AttachedHandlebodiesXI}. 
 \begin{figure}[h!]
 \centering
 \includegraphics[width=.4\textwidth]{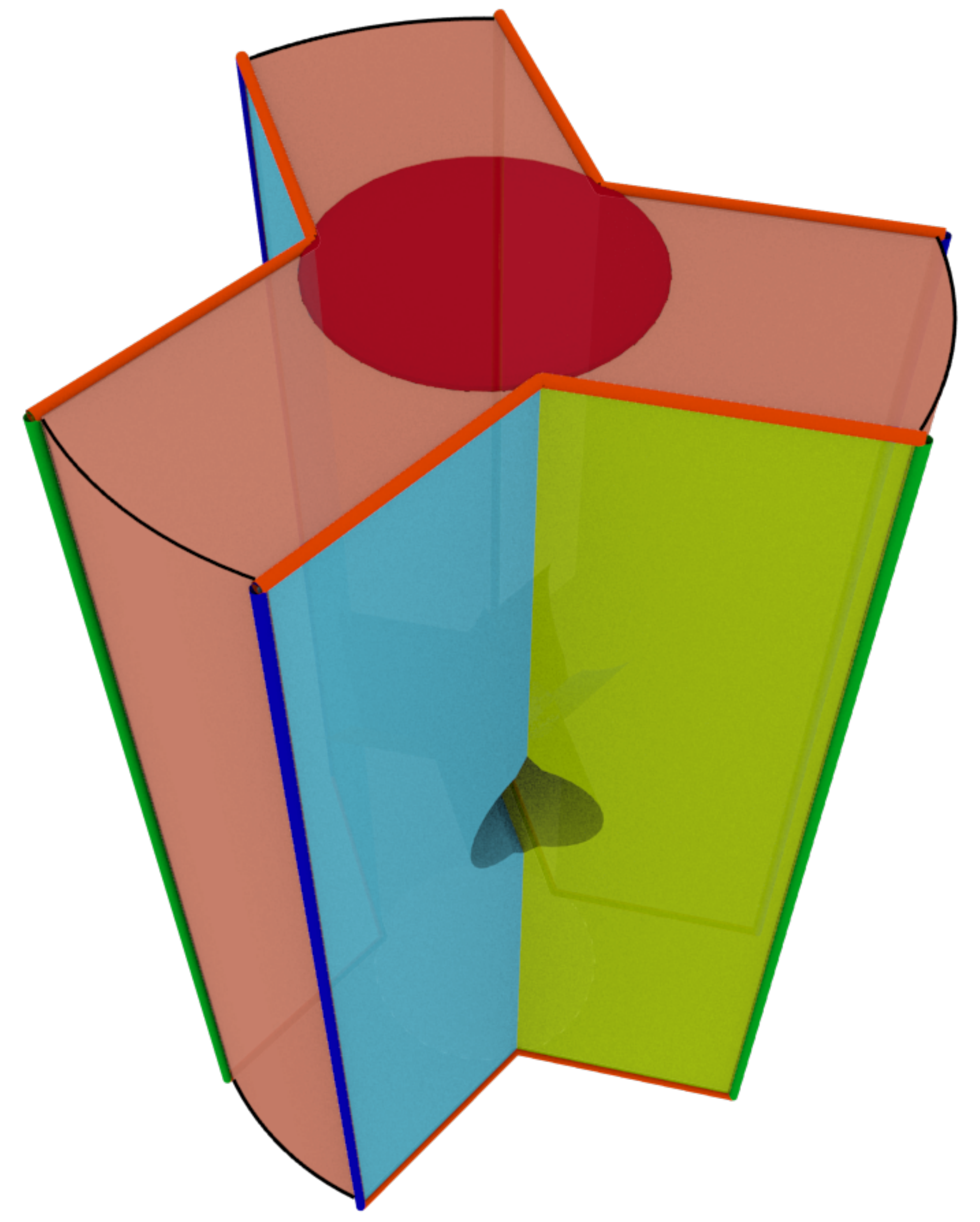}
 \caption{\label{F:AttachedHandlebodiesXI} $B^2 \times \Sigma$ with $I \times$ three relative compression bodies.}
 \end{figure}
 Using \autoref{C:UniqueIXP}, there is a unique way to glue $([-1,1] \times P) \cup (D \times \partial P)$, that is one third of an open book, to each of these sutured $3$--manifolds. Thickening the three pieces we have glued on to be $4$--dimensional, we get a $4$--manifold with four boundary components: one on the ``outside'', equal to an open book decomposition with page $P$, and three ``inside'' boundary components each diffeomorphic to $\#^k S^1 \times S^2$. This is illustrated in \autoref{F:UniqueIXP}, in which at the last stage we only see the outer boundary. Cap off each of the inside boundary components with $\natural^k S^1 \times B^3$ (uniquely, by~\cite{LaudenbachPoenaru}). The end result is our trisected $4$--manifold $X=X_1 \cup X_2 \cup X_3$. (Each $X_i$ is the union of a third of $B^2 \times \Sigma$, half of $I$ cross one relative compression body, half of $I$ cross the next relative compression body, the thickened copy of $([-1,1] \times P) \cup (D \times \partial P)$ glued in to this third, and the corresponding copy of $\natural^k S^1 \times B^3$.)
 \begin{figure}[h!]
 \centering
 \includegraphics[width=.9\textwidth]{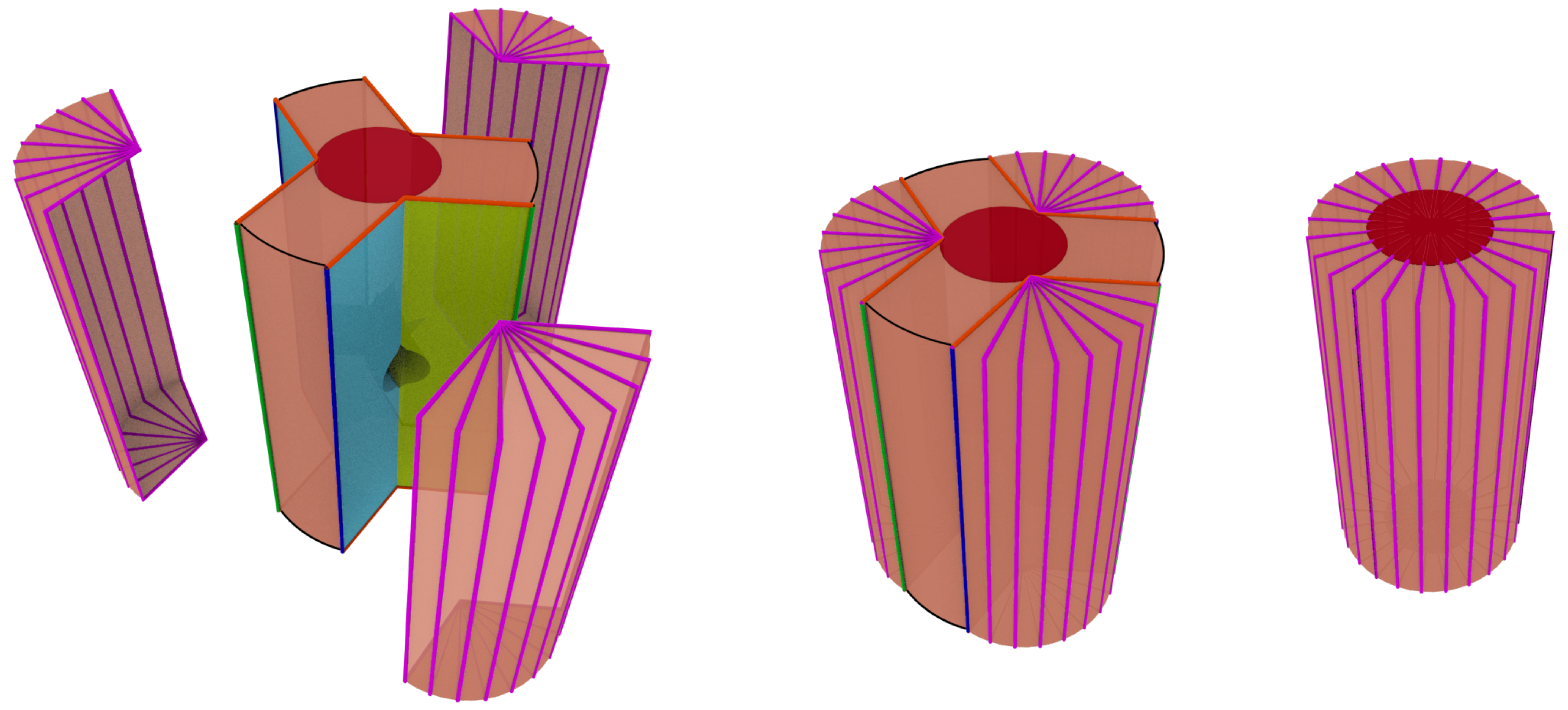}
 \caption{\label{F:UniqueIXP} Gluing on three groups of pages and closing up.}
 \end{figure}
 
\end{proof}

 \begin{reptheorem}{T:DiagramGivesMonodromy} 
 A relative trisection diagram $(\Sigma,\alpha,\beta,\gamma)$ encodes an open book decomposition on $\partial X$ with page given by $\Sigma_\alpha$, the surface resulting from $\Sigma$ by performing surgery along the $\alpha$ curves, and monodromy $\mu : \Sigma_\alpha \to \Sigma_\alpha$ determined by the following algorithm:
  \begin{enumerate}
   \item Choose an ordered collection of arcs $a$ on $\Sigma$, disjoint from $\alpha$ and such that its image $\phi_\alpha(a)$ in $\Sigma_\alpha$ cuts $\Sigma_\alpha$ into a disk.
   \item There exists a collection of arcs $a_1$ and simple closed curves $\beta'$ in $\Sigma$ such that $(\alpha,a_1)$ is handle slide equivalent to $(\alpha,a)$, $\beta'$ is handle slide equivalent to $\beta$, and $a_1$ and $\beta'$ are disjoint. (We claim that in this step we do not need to slide $\alpha$ curves over $\alpha$ curves, only $a$ arcs over $\alpha$ curves and $\beta$ curves over $\beta$ curves.) Choose such an $a_1$ and $\beta'$
   \item There exists a collection of arcs $a_2$ and simple closed curves $\gamma'$ in $\Sigma$ such that $(\beta',a_2)$ is handle slide equivalent to $(\beta',a_1)$, $\gamma'$ is handle slide equivalent to $\gamma$, and $a_2$ and $\gamma'$ are disjoint. (Again we claim that we do not now need to slide $\beta'$ curves over $\beta'$ curves.) Choose such an $a_2$ and $\gamma'$
   \item There exists a collection of arcs $a_3$ and simple closed curves $\alpha'$ in $\Sigma$ such that $(\gamma',a_3)$ is handle slide equivalent to $(\gamma',a_2)$, $\alpha'$ is handle slide equivalent to $\alpha$, and $a_3$ and $\alpha'$ are disjoint. (Now we do not need to slide $\gamma'$ curves over $\gamma'$ curves.) Choose such an $a_3$ and $\alpha'$.
   \item The pair $(\alpha',a_3)$ is handle slide equivalent to $(\alpha,a_*)$ for some collection of arcs $a_*$. Choose such an $a_*$. Note that now $a$ and $a_*$ are both disjoint from $\alpha$ and thus we can compare $\phi_\alpha(a)$ and $\phi_\alpha(a_*)$ in $\Sigma_\alpha$.
   \item The monodromy $\mu$ is the unique (up to isotopy) map such that $\mu(\phi_\alpha(a)) = \phi_\alpha(a_*)$, respecting the ordering of the collections of arcs.
  \end{enumerate}
 \end{reptheorem}

\begin{proof}[Proof of \autoref{T:DiagramGivesMonodromy}]

The fact that each of $(\Sigma,\alpha,\beta)$, $(\Sigma,\beta,\gamma)$ and $(\Sigma,\gamma,\alpha)$ is handle slide and diffeomorphism equivalent to the sutured Heegaard diagram $(\Sigma,\delta,\epsilon)$ in \autoref{F:StandardForPairs} tells us that we can in fact find the collections of arcs and sequences of slides advertised. Each time we find a collection of arcs which is disjoint from, for example, both $\beta$ and $\gamma$, this describes a diffeomorphism from $\Sigma_{\beta}$ to $\Sigma_{\gamma}$, which is the ``identity'' map coming from \autoref{C:UniqueIXP}. Thus we have the following steps:
\begin{enumerate}
 \item Note that $\phi_\alpha(a)$ is isotopic to $\phi_\alpha(a_1)$ in $\Sigma_\alpha$ because $a_1$ was produced from $a$ by sliding over $\alpha$ curves.
 \item Map $\Sigma_\alpha$ to $\Sigma_{\beta'}$ so as to send $\phi_\alpha(a_1) \subset \Sigma_\alpha$ to $\phi_{\beta'}(a_1) \subset \Sigma_{\beta'}$.
 \item Note that $\phi_{\beta'}(a_1)$ is isotopic to $\phi_{\beta'}(a_2)$ in $\Sigma_{\beta'}$ because $a_2$ was produced from $a_1$ by sliding over $\beta'$ curves.
 \item Map $\Sigma_{\beta'}$ to $\Sigma_{\gamma'}$ so as to send $\phi_{\beta'}(a_2) \subset \Sigma_{\beta'}$ to $\phi_{\gamma'}(a_2) \subset \Sigma_{\gamma'}$.
 \item Note that $\phi_{\gamma'}(a_2)$ is isotopic to $\phi_{\gamma'}(a_3)$ in $\Sigma_{\gamma'}$ because $a_3$ was produced from $a_2$ by sliding over $\gamma'$ curves.
 \item Map $\Sigma_{\gamma'}$ to $\Sigma_{\alpha'}$ so as to send $\phi_{\gamma'}(a_3) \subset \Sigma_{\gamma'}$ to $\phi_{\alpha'}(a_3) \subset \Sigma_{\alpha'}$.
 \item Map $\Sigma_{\alpha'}$ to $\Sigma_\alpha$ so as to send $\phi_{\alpha'}(a_3)$ to $\phi_\alpha(a_*)$.
\end{enumerate}
The fact that each of the maps in the above sequence of maps is independent of the choices is a restatement of \autoref{C:UniqueIXP}, and thus we see the monodromy expressed as a composition $\Sigma_\alpha \to \Sigma_{\beta'} \to \Sigma_{\gamma'} \to \Sigma_{\alpha'} \to \Sigma_\alpha$.

\end{proof}

\section{Examples}

\subsection{Disk Bundles over the 2--sphere $S^2$} \label{S:DiskBundles}
Consider $p:E_n\to S^2$ the oriented disk bundle over $S^2$ with Euler number $n$. Decompose $S^2$ as the union of three wedges $B_1,\; B_2,\; B_3$ that intersect pairwise in arcs joining the north and south pole and whose triple intersection consists precisely of the north and south poles as shown in \autoref{F:TrisectedS2}. Ideally, we would just lift this trisection of $S^2$ to get a trisection for $E_n$. However, although each $p^{-1}(B_i)$ is in fact a 4--dimensional 1--handlebody, the triple intersection of these pieces is not connected and so this naive decomposition of $E_n$ is not really a trisection. To fix this, for $i,j=1,2,3$ let $\varphi_i: B_i\times D^2\to p^{-1}(B_i)$ be a trivialization over $B_i$ and let $g_{ij}:B_i\cap B_j\to SO(2)$ be the transition function for $\varphi_i^{-1}\circ \varphi_j$. Next, parametrize each arc $B_i\cap B_{i+1}$ by $t\in [0,1]$ and use the cocycle condition to set 
\begin{equation}\label{D2xS2-transition}
g_{12},g_{23}: t\to 1,\text{ and } g_{31}:t\to e^{2\pi \mathbbm{i} nt}.
\end{equation} Here we are using the identification $e^{\mathbbm{i}\theta}=\left(\begin{array}{cc}\cos(\theta) & -\sin(\theta) \\ \sin(\theta) & \cos(\theta)\end{array}\right)$ and the notion of cocycle condition from \cite{davis-kirk}. In addition, choose sections $\sigma_i$ over $B_i$ ($i=1,2,3$) so that their images lie in the interior of the fiber and are disjoint from one another, let $\nu_i\cong B_i\times N_i$ be a tubular neighborhood of $\sigma_i\left(B_i\right)$ in $p^{-1}(B_i)$, and also assume that these tubular neighborhoods are interior in the fiber and pairwise disjoint. Finally, set $$X_i=\overline{p^{-1}\left(B_i\right)\setminus  \nu_i}\underset{\varphi_{i}\circ\varphi_{i+1}^{-1}}{\cup}  \nu_{i+1},$$ where the gluing is done via $\varphi_i\circ\varphi_{i+1}^{-1}:  \nu_{i+1}\cap p^{-1}(B_{i})\to  \nu_{i+1}\cap p^{-1}(B_{i})$. Notice that since $ \nu_i$ is a 2--handle, removing it from $p^{-1}(B_i)$ results in a space diffeomorphic to $S^1\times B^3$. In addition, since $\nu_{i+1}$ is attached along $ \nu_{i+1}\cap p^{-1}(B_{i})$ and this set is a 3--ball, attaching $\nu_{i+1}$ does not change the diffeomorphism type and thus $X_i$ is diffeomorphic to $S^1\times B^3$.\\

\begin{figure}[h]
\centering
\includegraphics[scale=.6]{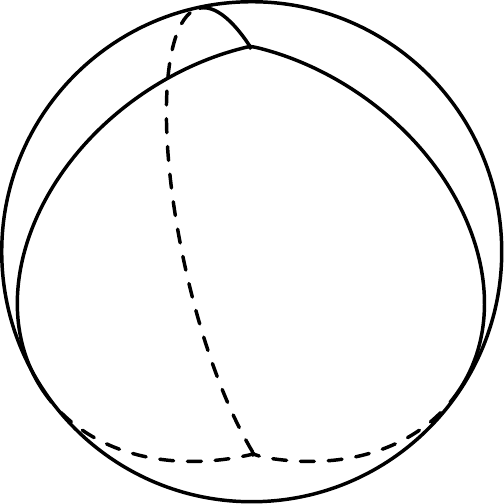}
\caption{\label{F:TrisectedS2} Decomposition of $S^2 = B_1 \cup B_2 \cup B_3$}
\end{figure}

For the $X_i$'s to define a trisection of $E_n$, we need to check that the intersections between them behave in the way stipulated in \autoref{D:RelTrisection}. With this in mind, consider first the pairwise intersection $X_{i-1}\cap X_i$ and notice that this intersection is such that %
\begin{equation}
\begin{split}
X_{i-1}\cap X_i= \overline{\left(p^{-1}\left(B_{i-1}\cap B_{i}\right)\setminus (\nu_{i-1}\cup \nu_{i})\right)} \\\underset{\varphi_{i-1}\circ \varphi_{i}^{-1}}{\cup} %
\partial_i\nu_i %
\underset{\varphi_{i}\circ \varphi_{i+1}^{-1}}{\cup} (\nu_{i+1}\cap p^{-1}(B_{i-1})).
\end{split}
\end{equation}%
Here $\left(p^{-1}\left(B_{i-1}\cap B_{i}\right)\setminus (\nu_{i-1}\cup \nu_{i})\right)$ is diffeomorphic to a 3-ball with two 2-handles removed, and $\nu_{i+1}\cap p^{-1}(B_{i-1})$ is a 1-handle. Moreover, the set $\partial_i\nu_i\cong B_i\times \partial N_i $, the boundary of $\nu_i$ as a subspace of $p^{-1}(B_i)$, is a solid torus attached to the 3-ball with two 2-handles removed along a cylinder in its boundary and thus is simply a thickening of one of the holes left by the 2-handles. We can then conclude that $X_{i-1}\cap X_i$ is diffeomorphic to a handlebody of genus 3. An extension of the previous argument then shows that the triple intersection is given by \begin{equation}\label{eq::D2xS2_triple}
\begin{split}
X_1\cap X_2 \cap X_3 =p^{-1}\left(B_{1}\cap B_{2}\cap B_3\right) \setminus  \left(\nu_1\cup  \nu_{2} \cup  \nu_3 \right)\\
\underset{\footnotesize\begin{array}{c}\varphi_{i}\circ \varphi_{i+1}^{-1}\\ i=1,2,3\end{array}}{\cup} 
\left[\bigcup_{i=1}^3 \partial_i\nu_i\cap p^{-1}(B_{i+1}) \right], 
\end{split}
\end{equation}
where $p^{-1}\left(B_{1}\cap B_{2}\cap B_3\right) \setminus  \left(\nu_1\cup  \nu_{2} \cup  \nu_3 \right)$ consists of the disjoint union of two 2--disks with three interior disks removed, and each $\partial_i\nu_i\cap p^{-1}(B_{i+1})$ is diffeomorphic to the cylinder $B_i\cap B_{i+1}\times \partial N_i$ and is glued to the first space in such a way that it joins internal boundary components of the two different disks. From this it follows that the triple intersection is a twice punctured genus two surface. The last intersections to consider are those that involve the boundary, namely, $X_i\cap E_n$ and $X_{i-1}\cap X_i\cap \partial E_n$. In this case we have $$X_i\cap \partial E_n=\partial p^{-1}(B_i)\setminus p^{-1}(\partial B_i)\cong B_i\times\partial D^2,$$ and $$X_{i-1}\cap X_i\cap \partial E_n=\partial p^{-1}(B_{i-1}\cap B_i)\setminus p^{-1}(\partial B_{i-1}\cap \partial B_i)\cong B_{i-1}\cap B_i\times \partial D^2.$$ From this we see that $X_i\cap E_n$ is diffeomorphic to $I\times X_{i-1}\cap X_i\cap \partial E_n$ with the space $\partial I\times X_{i-1}\cap X_i\cap \partial E_n$ identified, or, using the terminology of \autoref{cons::bdryD} that $X_i\cap E_n$ is diffeomorphic to $\partial ^0 D\times (X_{i-1}\cap X_i\cap \partial E_n) \cup D\times \partial (X_1\cap X_2\cap X_3)$. \\

In sum, the previous paragraphs describe a $(2,1;0,2)$ relative trisection of $E_n$ whose relative trisection diagram $(\Sigma,\alpha,\beta,\gamma)$ has yet to be exhibited. To this end, notice that by \autoref{eq::D2xS2_triple}, $\Sigma$ is a surface decomposed as the union of two copies of a three times punctured disk with three cylinders joining the punctures of the two disks. To finish the description of the diagram, it is enough to find three sets of curves in $F=X_1\cap X_2\cap X_3$ that bound disks in the double intersections $X_{i-1}\cap X_{i}$, and draw their images in $\Sigma$. For example, in $X_{3}\cap X_1$, the 1-handle $\nu_2\cap p^{-1}(B_3)$ has the cylinder $\partial_2\nu_2\cap p^{-1}(B_{3})$ as its boundary and so the central circle in the latter is one of the curves in the collection $\gamma$. A similar argument applied to the other two pairwise intersections shows that the central circle in $\partial_3\nu_3\cap p^{-1}(B_{1})$ is a curve in $\alpha$ and $\partial_1\nu_1\cap p^{-1}(B_{2})$ is a curve in $\beta$. Next, consider the disk $\mathcal{D}$ in $X_3\cap X_1$ constructed as the union of a disk in $p^{-1}(B_3\cap B_1)\setminus \nu_3\cup\nu_1$ that lies between the holes left by $\nu_3,\,\nu_1$ with a meridional disk in $\partial_3\nu_3$. Then, the curve $\partial \mathcal{D}$ can be realized as the union of: 
\begin{enumerate}[label=(\roman*)]
\item\label{arc-vert} a properly embedded arc in $ \partial_{3}\nu_{3}\cap p^{-1}(B_{1})$ with one endpoint in each boundary component,
\item\label{arc-twist} a properly embedded arc in $ \partial_{1}\nu_{1}\cap p^{-1}(B_{2})$ with one endpoint in each boundary component, and
\item two horizontal arcs that lie in different components of the disjoint union of disks $\left(p^{-1}\setminus  \nu_1\cup  \nu_{2} \cup  \nu_3 \right)\left(B_{1}\cap B_{2}\cap B_3\right)$. 
\end{enumerate}
This curve $\partial \mathcal{D}$ is the second curve in the collection $\gamma$ and to draw it in $\Sigma$ we have to proceed with caution since by assumption the gluing map $\varphi_3\circ\varphi_1^{-1}$ depends on $n$. Indeed, using \autoref{D2xS2-transition} we see that the two disks that make up $\mathcal{D}$ align only if the second one is twisted. Thus, the arc described in \autoref{arc-twist}  appears in $ \partial_{1}\nu_{1}\cap p^{-1}(B_{2})$ as an arc with $n$-twists. Lastly, to get the remaining curves in $\alpha$ and $\beta$, we proceed in a similar manner noticing that in these cases the gluing maps are trivial and thus the analogous arcs to the one from \autoref{arc-twist} are not twisted. This shows that the trisection diagram corresponding to the decomposition $E_n=X_1\cup X_2\cup X_3$ can be obtained from the one shown in \autoref{fig::D2xS2} by replacing the single left handed twist on the green curve appearing in the right, with $n$ full twists around the cylinder. 
\begin{figure}[h]
\includegraphics[scale=.8]{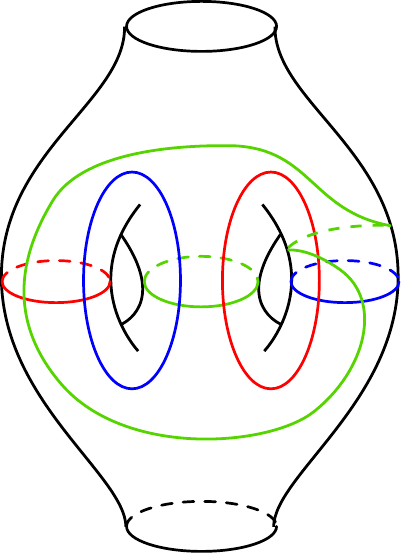}
\caption{A $(2,1;0,2)$ relative trisection diagram for the disk bundle over $S^2$ corresponding to the integer -1. The monodromy of the open book in the boundary is a left handed twist}\label{fig::D2xS2}
\end{figure}

\subsection{Local modifications of diagrams, Lefschetz fibrations and Hopf plumbings}

Throughout this section, suppose that we are given a relative trisection diagram $(\Sigma,\alpha,\beta,\gamma)$ for a trisected $4$--manifold $X=X_1 \cup X_2 \cup X_3$, with induced open book on $\partial X$ with page $P = \Sigma_\alpha$ and monodromy $\mu: P \to P$.

\begin{lemma} \label{L:1Handle}
 Let $\Sigma' \supset \Sigma$ be the result of attaching a $2$--dimensional $1$--handle to $\Sigma$ along some $S^0 \subset \partial \Sigma$. Then the tuple $(\Sigma',\alpha,\beta,\gamma)$ is a relative trisection diagram for a trisected $4$--manifold $X' = X_1' \cup X_2' \cup X_3'$ such that $X'$ is the result of attaching a $4$--dimensional $1$--handle $H$ to $X$ along the same $S^0 \subset \partial \Sigma$, seeing $\partial \Sigma \subset \partial X$ as the binding of the open book on $\partial X$. Furthermore, $H = H_1 \cup H_2 \cup H_3$, where each $H_i$ is a $4$--dimensional $1$--handle attached to $X_i$ to form $X_i'$. The open book on $\partial X'$ has page $P' = P \cup h$, the result of attaching the $2$--dimensional $1$--handle $h$ to $P$, and monodromy $\mu'$ equal to $\mu$ extended by the identity across $h$.
\end{lemma}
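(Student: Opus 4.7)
The plan is to construct the trisection of $X\cup H$ directly and then invoke the uniqueness part of \autoref{T:DiagramsForTrisections} to identify it with the trisection determined by $(\Sigma',\alpha,\beta,\gamma)$. First I would check that $(\Sigma',\alpha,\beta,\gamma)$ is a valid relative trisection diagram: since the attached $2$-dimensional $1$-handle $h = \overline{\Sigma'\setminus\Sigma}$ lies in a collar of $\partial\Sigma$ and is disjoint from all three curve systems, each pair $(\Sigma',\alpha,\beta)$, $(\Sigma',\beta,\gamma)$, $(\Sigma',\gamma,\alpha)$ is obtained from the corresponding standard form of \autoref{F:StandardForPairs} by a local modification of the $b$ boundary tails --- either splitting one tail into two (when $S^0$ lies in a single component of $\partial\Sigma$) or joining two tails by a handle (when $S^0$ spans two components). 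In both cases the resulting picture is still of standard form, with new parameters $k'=k+1$ together with $(g',p',b')=(g,p,b+1)$ or $(g+1,p+1,b-1)$, respectively; the inequality $2p'+b'-1\le k'\le g'+p'+b'-1$ follows immediately from that for $(g,k;p,b)$.

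Next I would trisect the $4$-dimensional $1$-handle $H=D^1\times D^3$, attached to $X$ along $S^0\times D^3$ inside a collar of the binding. Writing $D^3=D^1\times D^2$, I trisect $D^2$ into three wedges meeting at the origin and set $H_i := D^1\times D^1\times(\mathrm{wedge}_i)$. The trisection of $X$ near a binding point admits a local product model of the form $D^1_{\mathrm{binding}}\times D^2$ with $D^2$ trisected into three wedges (combining observation (3) with the explicit model of \autoref{F:RelModel}), so one arranges the two attaching $3$-balls of $H$ to match this local model. Each $H_i$ is then a $4$-dimensional $1$-handle attached to $X_i$, so $X_i' := X_i\cup H_i\cong\natural^{k+1}(S^1\times B^3)=Z_{k'}$. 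The triple intersection computes as $X_1'\cap X_2'\cap X_3' = \Sigma\cup(D^1\times D^1)=\Sigma'$; each pairwise intersection $X_i'\cap X_j'$ is the old relative compression body with a $3$-dimensional $1$-handle adjoined and realizes the new piece $Y^\pm_{g',k';p',b'}$; and each $X_i'\cap\partial X'$ picks up the corresponding extra portion of open book sector. Since $\alpha,\beta,\gamma\subset\Sigma\subset\Sigma'$ still bound the same disks in the enlarged pairwise intersections, the trisection diagram associated to $X_1'\cup X_2'\cup X_3'$ is $(\Sigma',\alpha,\beta,\gamma)$, and uniqueness in \autoref{T:DiagramsForTrisections} completes the identification $X'\cong X\cup H$ with $X_i'=X_i\cup H_i$.

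For the open book on $\partial X'$: since surgery along $\alpha$ commutes with attaching the disjoint $1$-handle $h$, the new page is $\Sigma'_\alpha=\Sigma_\alpha\cup h=P\cup h$, glued along $S^0\subset\partial P$. For the monodromy I would apply \autoref{T:DiagramGivesMonodromy} with a cut system for $P'$ obtained from one for $P$ by appending a single arc $a_h\subset h$ cutting $h$ open: because $a_h$ is disjoint from $\alpha,\beta,\gamma$ from the outset, no handleslides involving $a_h$ are required at any step of the algorithm, so $a_*=a\cup a_h$ and $\mu'=\mu\cup\mathrm{id}_h$. The main technical obstacle is the local compatibility step above: verifying carefully that the trisection of $X$ near a binding point really takes the product form described, so that the trisection of $H$ glues on tautologically and the enlarged pairwise intersections genuinely realize $Y^\pm_{g',k';p',b'}$ in $\partial Z_{k'}$. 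Everything else reduces to the construction used in the proof of \autoref{T:DiagramsForTrisections} and to routine checks of the diagram equivalences.
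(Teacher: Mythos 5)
Your argument runs in the opposite direction from the paper's and is workable in outline, but it is considerably heavier. The paper's proof simply applies the diagram-to-manifold construction from the proof of \autoref{T:DiagramsForTrisections} to $(\Sigma',\alpha,\beta,\gamma)$: since $\alpha$, $\beta$, $\gamma$ miss the handle $h$, every piece of the construction for $\Sigma$ sits inside the corresponding piece for $\Sigma'$, so $X$ is naturally a subset of $X'$ and $X'\setminus X$ is exactly $B^2\times h$, a $4$--dimensional $1$--handle which the three thirds $D_i\times h$ of $B^2$ split into the advertised $H_i$; no appeal to uniqueness is needed and the identification of the $H_i$ comes for free. You instead trisect $H=D^1\times D^3$ by hand, glue it to the trisection of $X$ in a local model near two binding points, verify the axioms of \autoref{D:RelTrisection} for $X\cup H$, read off its diagram, and only then invoke the uniqueness half of \autoref{T:DiagramsForTrisections}. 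Your bookkeeping is right (the two cases for $S^0$, the new parameters $(g,k+1;p,b+1)$ or $(g+1,k+1;p+1,b-1)$, the unchanged curve count $g'-p'=g-p$), and your explicit treatment of the monodromy via \autoref{T:DiagramGivesMonodromy}, appending an arc $a_h\subset h$ that is never involved in any slide, is a nice justification of the last sentence of the lemma, which the paper's proof leaves implicit. However, the step you yourself flag as the ``main technical obstacle'' is genuine work, not a routine check: you must show that each enlarged triple $\bigl(X_i\cup H_i,\ X_i'\cap X_{i\pm1}',\ X_i'\cap\partial X'\bigr)$ is diffeomorphic to the model $\bigl(Z_{k+1},\,Y^{\mp}_{g',k';p',b'},\,Y^{0}_{g',k';p',b'}\bigr)$, i.e.\ that attaching the standardly trisected $1$--handle to $Z_k$ at two binding points reproduces the standard model for the new parameters; also note that your local model ``$D^1_{\mathrm{binding}}\times D^2$'' is only the $3$--dimensional picture in $\partial X$, a binding point having neighborhood $D^1\times D^2\times[0,\epsilon)$ in $X$. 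So as written your proposal defers exactly the verification that the paper's construction-based argument is designed to avoid; it would need to be carried out (or replaced by the paper's observation that $X'\setminus X=B^2\times h$ in the diagrammatic construction) to be complete.
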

\begin{proof}
 Let $h$ be the $2$--dimensional $1$--handle attached to $\Sigma$ to form $\Sigma'$. In the construction of $X$ and $X'$, we see that $X$ is naturally a subset of $X'$ and that $X' \setminus X$ is precisely a $1$--handle $H = B^2 \times h$. Splitting $B^2$ into three thirds $B^2 = D_1 \cup D_2 \cup D_3$ gives the three $1$--handles $H_i = D_i \times h$.
\end{proof}

\begin{lemma} \label{L:2Handle}
 Consider a simple closed curve $C \subset \Sigma$ disjoint from $\alpha$ and transverse to $\beta$ and $\gamma$. Let $(\Sigma^\pm,\alpha^\pm,\beta^\pm,\gamma^\pm)$ be the result of removing a cylinder neighborhood of $C$, together with the $\beta$ and $\gamma$ arcs running across this neighborhood, and replacing it with a twice-punctured torus as in \autoref{F:LocalVanCycle} with $\beta$ and $\gamma$ arcs as drawn, and with one new $\alpha$, $\beta$ and $\gamma$ curve as drawn. Then $(\Sigma^\pm,\alpha^\pm,\beta^\pm,\gamma^\pm)$ is a relative trisection diagram for a trisected $4$--manifold $X' = X_1' \cup X_2' \cup X_3'$ such that $X'$ is the result of attaching a $2$--handle to $X$ along $C \subset P$ with framing $\mp 1$ relative to $P$, and such that the open book on $\partial X'$ has page $P$ with monodromy $\tau_C^{\pm 1} \circ \mu$, where $\tau_C$ is a right-handed Dehn twist about $C$.
\end{lemma}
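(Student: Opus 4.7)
The plan is to treat this as an entirely local statement and reduce all three assertions (that the modified tuple is a diagram, that the corresponding $4$-manifold is $X$ with a $2$-handle attached, and that the new monodromy is $\tau_C^{\pm 1}\circ\mu$) to inspections in the neighborhood of $C$. First I would check that $(\Sigma^\pm,\alpha^\pm,\beta^\pm,\gamma^\pm)$ is a valid relative trisection diagram. Since only a cylinder neighborhood of $C$ has been altered and only one new curve is added to each of $\alpha,\beta,\gamma$, each of the three pairs differs from $(\Sigma,\alpha,\beta)$, $(\Sigma,\beta,\gamma)$, $(\Sigma,\gamma,\alpha)$ only by a local modification supported on the twice-punctured torus. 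A direct inspection of \autoref{F:LocalVanCycle} shows that each such local modification is the standard-model addition of a single pair of dual curves in a new handle, hence is handleslide equivalent to the corresponding local stabilization of $(\Sigma,\delta,\epsilon)$.

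Next I would identify the $4$-manifold. Building $X'$ from the new diagram using the construction in the proof of \autoref{T:DiagramsForTrisections}, the new $\alpha^\pm$ curve is dual to the new handle of $\Sigma^\pm$, so surgery along $\alpha$ together with this extra curve recovers $P=\Sigma_\alpha$; the page of the induced open book on $\partial X'$ is therefore unchanged. Tracing the new relative compression bodies built from $\alpha^\pm,\beta^\pm,\gamma^\pm$ against the original ones built from $\alpha,\beta,\gamma$, one sees that the additional data (the new handle together with the three new curves) corresponds precisely to attaching a single $4$-dimensional $2$-handle. The attaching circle is the image of $C$ in the page $P$, since $C$ is disjoint from $\alpha$ and therefore descends, and the local twisting pattern among $\alpha^\pm,\beta^\pm,\gamma^\pm$ in \autoref{F:LocalVanCycle} contributes framing $\mp 1$ relative to the page framing by a direct count.

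For the monodromy assertion I would run the algorithm of \autoref{T:DiagramGivesMonodromy} on $(\Sigma^\pm,\alpha^\pm,\beta^\pm,\gamma^\pm)$ with a carefully chosen arc system. Outside the local region I take the same arcs that compute $\mu$ from the old diagram; inside the local region I include one extra arc $a_C$ dual to the new $\alpha^\pm$ curve and crossing $C$ exactly once. The nonlocal part of the algorithm is then identical to the original one and contributes $\mu$. The local part consists of a short sequence of slides pushing $a_C$ successively off the new $\beta^\pm$, $\gamma^\pm$, and $\alpha^\pm$ curves, which can be read off directly from the figure; when the final arc is pushed back to $\Sigma^\pm_{\alpha^\pm}\cong P$ and compared with $a_C$, the net difference is precisely a Dehn twist $\tau_C^{\pm 1}$ about $C$. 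Composing the local and nonlocal contributions yields $\tau_C^{\pm 1}\circ\mu$.

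The most delicate part of the proof is this last local computation: verifying that the slide sequence realizes exactly one Dehn twist about $C$, with no leftover small twist and with the correct sign. The two appearances of $\pm$ in the statement of the lemma (framing and twist) are coupled, so one has to ensure that the orientation conventions implicit in \autoref{F:LocalVanCycle} match the framing calculation in the previous step; once this synchronization is established, both halves of the statement follow simultaneously.
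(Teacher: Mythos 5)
Your outline (check the diagram condition locally, identify the $4$--manifold, then run the monodromy algorithm on an arc crossing $C$) points in the same direction as the paper, but the central step is asserted rather than argued, and that is where the entire content of the lemma lies. You say that ``tracing the new relative compression bodies against the original ones, one sees'' that the extra data corresponds to a single $4$--dimensional $2$--handle along $C$, with the framing $\mp 1$ obtained ``by a direct count.'' No mechanism is given for either claim: the modification of \autoref{F:LocalVanCycle} happens in the interior of $\Sigma$, adds one unit of genus and one curve to each family, and nothing in the compression-body construction of \autoref{T:DiagramsForTrisections} lets you read off directly that the resulting trisected manifold differs from $X$ by exactly one $2$--handle, let alone with which framing. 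The paper supplies the missing mechanism: choose an arc $A$ joining $C$ to $\partial\Sigma$, disjoint from $\alpha$ and transverse to $\beta,\gamma$, and redraw the modification (\autoref{F:LocalVanV2}, \autoref{F:LocalVanV3}) so that $\Sigma^\pm$ is obtained from $\Sigma$ by attaching two $2$--dimensional $1$--handles along $\partial\Sigma$ (this is why the arc to the boundary is needed). Then \autoref{L:1Handle} turns these into two $4$--dimensional $1$--handles, the three new curves become three $4$--dimensional $2$--handles attached with $0$--framing relative to the pages in which they sit, and the $\beta$-- and $\gamma$--handles cancel the two $1$--handles; only after this cancellation does the remaining handle appear as a single $2$--handle attached along $C\subset P$ with the advertised framing. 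Without this reinterpretation and cancellation argument (or some substitute for it), your framing statement is a bare assertion; note also that the handle slides of the rerouted $\beta,\gamma$ arcs over the new curves, which you do not mention, are part of what makes the local picture standard.

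There is also a flaw in the monodromy step as you set it up: the algorithm of \autoref{T:DiagramGivesMonodromy} requires the arcs to be disjoint from $\alpha^\pm$, so you cannot add an ``extra arc $a_C$ dual to the new $\alpha^\pm$ curve''; moreover, appending an extra arc to a system that already cuts $P$ into a disk no longer cuts it into a disk. What is needed, and what the paper does in \autoref{F:LocalVanMon}, is to take an arc system for $P$ that contains an arc transverse to $C$ and disjoint from all of $\alpha^\pm$, and carry out the slide sequence locally; this produces $\tau_C^{\pm 1}$ on that arc and the identity elsewhere. Your intended conclusion is correct, but as written the reduction of the $4$--dimensional identification and the framing to a ``direct count'' is a genuine gap.
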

\begin{figure}[h]
\def\svgwidth{0.65\textwidth}
			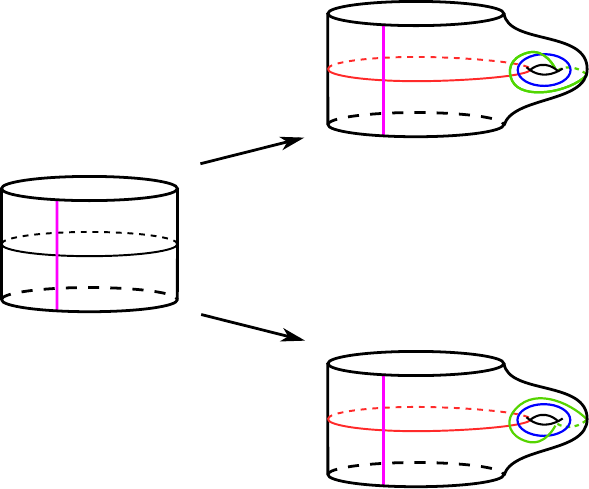
\caption{Local modification of $(\Sigma,\alpha,\beta,\gamma)$ near a curve $C$ disjoint from $\alpha$ and transverse to $\beta$ and $\gamma$. The pink transverse arc represents a collection of parallel $\beta$ and $\gamma$ arcs.}\label{F:LocalVanCycle}
\end{figure}
\begin{proof}
 Since $(\Sigma,\alpha,\beta,\gamma)$ is a trisection diagram, we know that there is an arc $A$ connecting $C$ to $\partial X$ avoiding $\alpha$ and transverse to $\beta$ and $\gamma$; we draw a neighborhood of $C \cup A$ as on the left in \autoref{F:LocalVanV2}. In this picture there are two groups of $\beta$ and $\gamma$ arcs: those transverse to $C$ and those transverse to $A$. The modification drawn in \autoref{F:LocalVanCycle} is then redrawn in \autoref{F:LocalVanV2} so that we see the new genus in $\Sigma'$ as arising from $\Sigma$ by attaching two $2$--dimensional $1$--handles $h_1$ and $h_2$. The $\beta$ and $\gamma$ arcs that were transverse to $A$ avoid the new $\alpha$, $\beta$ and $\gamma$ curve by running parallel to $\partial \Sigma'$. Note that we can slide these boundary-parallel $\beta$ and $\gamma$ arcs over the new $\beta$ or, respectively, $\gamma$ curve to get \autoref{F:LocalVanV3}. (Each $\beta$, resp. $\gamma$, arc slides twice over the $\beta$, resp. $\gamma$, curve.) Thus we can take \autoref{F:LocalVanV3} to be the modification of the trisection diagram which we work with; i.e. $(\Sigma^\pm,\alpha^\pm,\beta^\pm,\gamma^\pm)$ is obtained from $(\Sigma,\alpha,\beta,\gamma)$ by replacing the figure on the left in \autoref{F:LocalVanV2} with \autoref{F:LocalVanV3}.
\begin{figure}[h]
\def\svgwidth{0.75\textwidth}
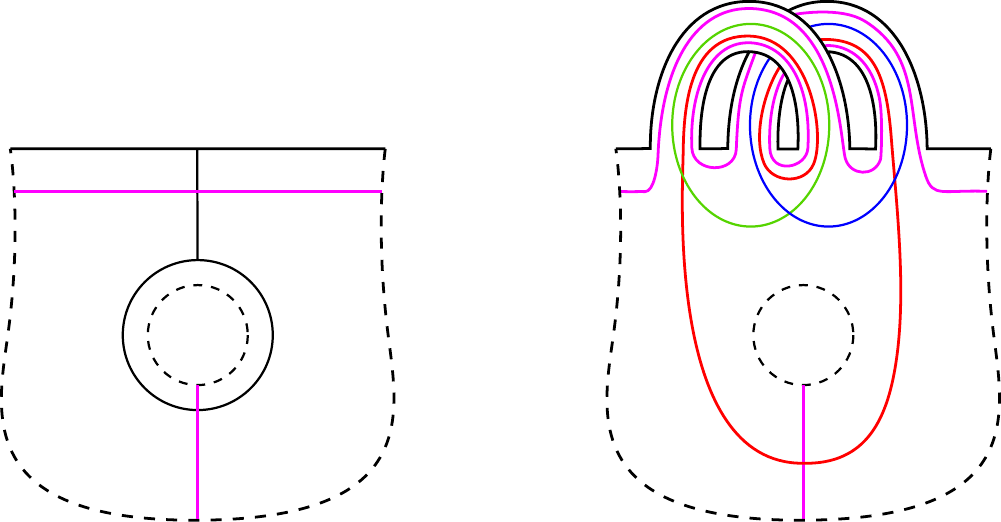
\caption{A different perspective of the local modification of $(\Sigma,\alpha,\beta,\gamma)$, taking into account an arc $A$ connecting $C$ to $\partial \Sigma$. Again, the pink arcs represent collections of parallel $\beta$ and $\gamma$ arcs; now one collection of such arcs is transverse to the closed curve $C$ and one collection is transverse to the arc $A$.}\label{F:LocalVanV2}
\end{figure}
\begin{figure}[h]
\def\svgwidth{0.3\textwidth}
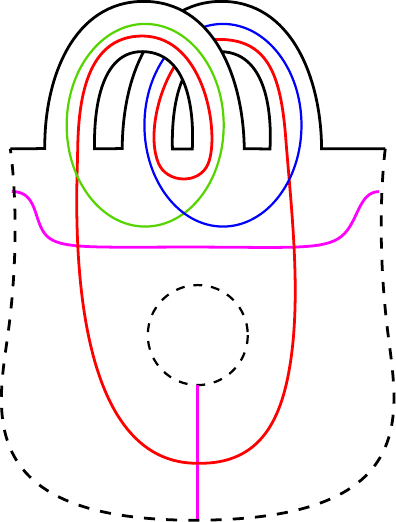
\caption{After some handle slides.}\label{F:LocalVanV3}
\end{figure}

Now, recalling the construction of $X$ from the diagram $(\Sigma,\alpha,\beta,\gamma)$ and of $X'$ from $(\Sigma^\pm,\alpha^\pm,\beta^\pm,\gamma^\pm)$, we see that $X'$ is naturally built by adding two $4$--dimensional $1$--handles to $X$ (as in \autoref{L:1Handle}) followed by three $4$--dimensional $2$--handles, one along the new $\alpha$ curve in $\Sigma'_\alpha$, one along the new $\beta$ curve in $\Sigma'_\beta$ and one along the new $\gamma$ curve in $\Sigma'_\gamma$, with $0$--framings relative to the pages in which they sit. The $\beta$ and $\gamma$ $2$--handles each, topologically, cancel one of the new $1$--handles, and when this cancellation is performed, we see that the $\alpha$ curve now sits in $\Sigma_\alpha$ with framing equal to $\pm 1$ with respect to $\Sigma_\alpha$.

 \autoref{F:LocalVanMon} shows a local implementation of the algorithm from \autoref{T:DiagramGivesMonodromy} to show the effect of the new monodromy on a single arc transverse to $C$, thus completing the proof of the lemma.

\begin{figure}[h]
\centering
\xymatrix@M=1em{
\Sigma^+\includegraphics[width=.19\textwidth]{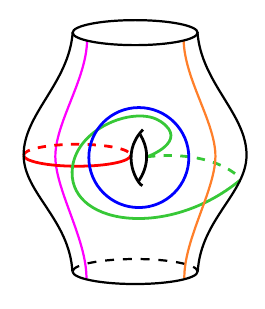}\ar@<1.25cm>[r]\ar[d]&
\includegraphics[width=.195\textwidth]{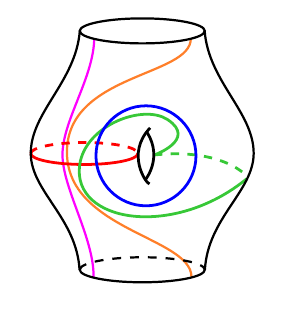}\ar@<1.25cm>[r]&
\includegraphics[width=.19\textwidth]{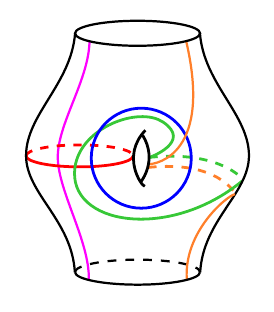}\ar[d]\\
P\includegraphics[width=.125\textwidth]{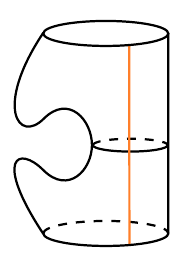}\ar@<1cm>[rr]^{\mu}&&
\includegraphics[width=.125\textwidth]{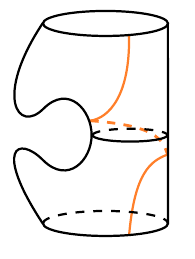}
}
\caption{Local effect on the monodromy.}\label{F:LocalVanMon}
\end{figure}
 
\end{proof}

Note that the roles of $\alpha$, $\beta$ and $\gamma$ in \autoref{L:2Handle} can obviously be cyclically permuted; in some of the following applications, $\gamma$ will play the role that $\alpha$ plays here.

We have two immediate corollaries. The first describes a stabilization operation on trisection diagrams corresponding to Hopf plumbing on the bounding open book decomposition, and is the diagrammatic version of the construction described in section 3.3 of \cite{NickThesis}.
\begin{cor} Suppose that $X$ has a trisection $T$ with induced open book decomposition $D$ on $\partial X$, and that $D^+$ (resp. $D^-$) is an open book decomposition of $\partial X$ obtained from $D$ by plumbing a left-handed (resp. right-handed) Hopf band along a properly embedded arc $A$ in a page $P$ of the open book $D$. If $T$ is described by the relative trisection diagram $(\Sigma,\alpha,\beta,\gamma)$ such that $P$ is identified with $\Sigma_\alpha$, consider the new diagram $(\Sigma^{'\pm},\alpha^{\pm},\beta^{\pm},\gamma^{\pm})$ obtained by first attaching a $2$--dimensional $1$--handle to $\Sigma$ at the end points of $A$, as in \autoref{L:1Handle}, producing $(\Sigma',\alpha,\beta,\gamma)$ and then modifying this as in \autoref{L:2Handle} in a neighborhood of the curve $C$ obtained by attaching the core of the $1$--handle to the arc $A$. Then $(\Sigma^{'\pm},\alpha^{\pm},\beta^{\pm},\gamma^{\pm})$ is again a trisection of $X$ inducing the open book decomposition $D^\pm$ on $\partial X$. \end{cor}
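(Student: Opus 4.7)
The plan is to apply \autoref{L:1Handle} and then \autoref{L:2Handle} in succession, and finally to verify (i) that the two $4$--dimensional handles introduced cancel, so that the total $4$--manifold is unchanged, and (ii) that the resulting open book on $\partial X$ is exactly the Hopf plumbing $D^\pm$.

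First I would isotope $A$ inside the page $P = \Sigma_\alpha$ so that it avoids the points corresponding to surgery along $\alpha$; this lets us lift $A$ to an embedded arc $\tilde A \subset \Sigma$ disjoint from $\alpha$ with endpoints on $\partial \Sigma = \partial P$. Applying \autoref{L:1Handle} to attach a $2$--dimensional $1$--handle $h$ at $\partial \tilde A$ produces the relative trisection diagram $(\Sigma',\alpha,\beta,\gamma)$ for the $4$--manifold $X' = X \cup H$, where $H$ is the corresponding $4$--dimensional $1$--handle. The induced open book on $\partial X'$ has page $P' = P \cup h$ and monodromy $\mu' = \mu \cup \mathop{id}_h$.

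Next, let $C = \tilde A \cup (\text{core of }h) \subset \Sigma'$. By construction $C$ is a simple closed curve disjoint from $\alpha$, and after a small isotopy it is transverse to $\beta$ and $\gamma$, so \autoref{L:2Handle} applies. It produces the diagram $(\Sigma^{'\pm},\alpha^{\pm},\beta^{\pm},\gamma^{\pm})$, which describes the $4$--manifold $X'' = X' \cup N$, where $N$ is a $4$--dimensional $2$--handle attached along $C \subset P'$ with framing $\mp 1$ relative to $P'$, and the induced open book on $\partial X''$ has page $P'$ and monodromy $\tau_C^{\pm 1} \circ \mu'$.

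The two things left to check are the diffeomorphism $X'' \cong X$ and the identification of the induced open book with $D^\pm$. For the cancellation, observe that $C$ contains the core of $h$, so the attaching circle of $N$ meets the co-core $3$--ball of $H$ transversely in a single point; hence $H$ and $N$ form a geometrically cancelling $1$--$2$ handle pair in dimension $4$ (independent of the framing of $N$), giving $X'' \cong X$. For the open book, $(P \cup h,\, \tau_C^{\pm 1} \circ \mu)$ is exactly the standard description of the open book obtained by plumbing a Hopf band of the appropriate handedness along $A$: the new page $P' = P \cup h$ is $P$ with a band attached at $\partial A$, the curve $C$ is the core of the plumbed Hopf band, and one Dehn twists about $C$ to obtain the new monodromy. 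The main obstacle will be keeping the sign conventions straight so that the framing $\mp 1$ produced by \autoref{L:2Handle} corresponds to precisely the left/right-handed Hopf band as claimed; the rest is a direct assembly of the two preceding lemmas with a routine handle cancellation.
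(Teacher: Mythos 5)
The paper gives no argument here (``we leave the proof of this corollary to the reader''), and your proposal is exactly the intended assembly: lift $A$ to an arc in $\Sigma$ disjoint from $\alpha$, apply \autoref{L:1Handle}, then \autoref{L:2Handle} to $C$, observe that the attaching circle of the new $2$--handle runs geometrically once over the new $4$--dimensional $1$--handle so the pair cancels independently of framing (hence the underlying manifold is still $X$), and identify $(P\cup h,\ \tau_C^{\pm1}\circ\mu)$ with the Hopf plumbing along $A$. This is correct in structure and substance.

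The one piece you defer, the handedness bookkeeping, is in fact the only place the statement could fail and deserves an explicit local-model check rather than a promise: under the usual convention that plumbing a right-handed (positive) Hopf band composes the monodromy with a right-handed twist, the ``$+$'' case of \autoref{L:2Handle} (framing $-1$, monodromy $\tau_C\circ\mu$ with $\tau_C$ right-handed) would correspond to the right-handed plumbing, whereas the corollary labels the ``$+$'' diagram with the left-handed plumbing $D^+$. So either the authors' naming convention for Hopf band handedness is the mirror of the common one, or the superscripts must be matched the other way; settling this requires carrying out the small computation in the local picture of \autoref{F:LocalVanCycle} (e.g.\ by checking the $n=-1$ disk-bundle example of \autoref{fig::D2xS2}, whose boundary open book the paper states has left-handed twist monodromy). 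Apart from pinning down that sign, your proof is complete.
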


We leave the proof of this corollary to the reader.\\

For the next corollary, let $P$ be a smooth orientable surface with boundary and for $c$ a curve embedded in $P$, denote by $\tau_c$ the right handed twist of $P$ along $c$. Given a 3--manifold $Y$ with open book decomposition given by $(P,\mu)$ with $\mu$ factored as $\mu=\tau_{c_n}^{\epsilon_n}\circ\ldots\circ \tau_{c_1}^{\epsilon_1}$ with $\epsilon_i\in\{-1,1\}$, and $c_i$ a curve in $P$, $i=1,\ldots,n$ it is well known that $Y$ is the boundary of a 4--manifold $X$ admitting an achiral Lefschetz fibration over $D^2$ with vanishing cyles $c_1,\ldots,c_n$. Moreover \cite{kas}, $X$ admits a handle decomposition diffeomorphic to the result of attaching $n$ 2--handles $h^2_1,\ldots,h^2_n,$ to $D^2\times P$ along the circles $\{1\}\times c_i$ with framing given by the surface framing minus $\epsilon_i$.

\begin{cor} \label{C:LFtoTrisection}
 Let $\pi: X\to D^2$ be an achiral Lefschetz fibration with regular fiber a surface $P$ of genus $p$ and $b$ boundary components, and with $n$ vanishing cyles. The manifold $X$ admits a $(p+n,2p+b-1;p,b)$ trisection. 
\end{cor}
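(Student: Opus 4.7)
The plan is to proceed by induction on the number $n$ of vanishing cycles, using \autoref{L:2Handle} to pass from $n-1$ vanishing cycles to $n$.

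For the base case $n=0$ we have $X \cong D^2 \times P$. Trisect $D^2$ into three closed sectors $D_1, D_2, D_3$ meeting at the origin and set $X_i = D_i \times P$. Each $X_i$ is diffeomorphic to the standard piece $U = D \times P$; when $g = p$ and $k = 2p+b-1$ the summand $V_{k-2p-b+1}$ in the model $Z_k = U \natural V$ is trivial and no stabilizations are needed, so $Z_k \cong U$ and each $X_i \cong Z_k$. The induced decomposition of $\partial X_i$ into its three pieces matches the model, yielding a $(p, 2p+b-1; p, b)$-trisection whose diagram is $(P, \emptyset, \emptyset, \emptyset)$ and whose induced open book on $\partial X$ has page $P$ and trivial monodromy.

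For the inductive step, suppose that $X_{n-1}$, the total space of the Lefschetz fibration with vanishing cycles $c_1, \ldots, c_{n-1}$ and signs $\epsilon_1, \ldots, \epsilon_{n-1}$, admits a $(p+n-1, 2p+b-1; p, b)$-trisection with diagram $(\Sigma, \alpha, \beta, \gamma)$, page $P = \Sigma_\alpha$, and monodromy $\tau_{c_{n-1}}^{\epsilon_{n-1}} \circ \cdots \circ \tau_{c_1}^{\epsilon_1}$. By Kas's theorem, $X_n$ is obtained from $X_{n-1}$ by attaching a single $2$-handle along a pushoff of $c_n$ into a page with framing equal to the page framing minus $\epsilon_n$. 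Lift $c_n$ to a simple closed curve $C \subset \Sigma$ disjoint from $\alpha$ — possible because $P = \Sigma_\alpha$ is obtained by surgery along $\alpha$ — and isotope $C$ to be transverse to $\beta$ and $\gamma$. Now apply \autoref{L:2Handle} at $C$ with the $+$ sign if $\epsilon_n = +1$ and with the $-$ sign if $\epsilon_n = -1$. The framing $\mp 1$ relative to $P$ matches the required framing, and the new monodromy is $\tau_C^{\pm 1} \circ \mu = \tau_{c_n}^{\epsilon_n} \circ \mu$.

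Tracking parameters under \autoref{L:2Handle}, the replacement of a cylinder by a twice-punctured torus raises $g$ by $1$ and fixes $b$; the page $P$ is preserved, so $p$ is unchanged. The net topological effect on $X$ is the attachment of a single $2$-handle, since in the proof of \autoref{L:2Handle} the two auxiliary $2$-dimensional $1$-handles on $\Sigma$ translate into $4$-dimensional $1$-handles on $X$ that are cancelled by the new $\beta$- and $\gamma$-curves; consequently each sector $X_i$ is preserved up to diffeomorphism, and $k$ remains equal to $2p+b-1$. After $n$ iterations we obtain a $(p+n, 2p+b-1; p, b)$-trisection of $X$, as desired. The main technical point to verify is that $k$ is genuinely constant throughout, which amounts to confirming sector-by-sector that the cancellation between the auxiliary $1$-handles and the new $\beta^\pm, \gamma^\pm$ $2$-handles persists inside each individual $X_i$; the remaining ingredients — the lift of $c_n$ to $\Sigma$ and the factorization of the Lefschetz monodromy into Dehn twists — are standard.
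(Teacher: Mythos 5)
Your proof is correct and follows essentially the same strategy as the paper: start with the trivial trisection of $D^2 \times P$ (parameters $g=p$, $k=2p+b-1$, empty $\alpha,\beta,\gamma$ collections) and apply \autoref{L:2Handle} once per vanishing cycle, pulling each $c_i$ back to the current central surface via the identification $P \cong \Sigma^{i-1}_{\alpha^{i-1}}$. The only difference in detail is in the base case, which the paper obtains from the $(0,0;0,1)$ trisection of $B^4$ by attaching $1$-handles via \autoref{L:1Handle}, whereas you construct it directly by sectoring the $D^2$ factor and identifying each sector with the model $Z_{2p+b-1} = U = D\times P$; both routes produce the same $(p,2p+b-1;p,b)$ trisection of $D^2 \times P$, and the inductive step is identical.
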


\begin{proof}
Build $X$ and its trisection beginning with the standard $(0,0;0,1)$ trisection of $B^4$ and attaching $1$--handles as in \autoref{L:1Handle} to produce $P \times D^2$ with a trisection inducing the standard open book on $P \times S^1$ with page $P$ and identity monodromy. At this stage the central surface $\Sigma^0$ is $P$, and there are no $\alpha$, $\beta$ or $\gamma$ curves. Attach a $2$--handle along $c_1$ as in \autoref{L:2Handle} to get a new $(\Sigma^1,\alpha^1,\beta^1,\gamma^1)$, such that each of $\alpha^1$, $\beta^1$ and $\gamma^1$ consists of a single curve, and $P$ is identified with $\Sigma^1_{\alpha^1}$. Now, as $i$ goes from $2$ to $n$ repeat the following process: Pull $c_i$ back from $P$ to $\Sigma^{i-1}$, using the fact that $P$ is identified with $\Sigma^{i-1}_{\alpha^{i-1}}$, and then apply \autoref{L:2Handle} to $c_i \subset \Sigma^{i-1}$ to produce $(\Sigma^i,\alpha^i,\beta^i,\gamma^i)$, with $P$ again identified with $\Sigma^i_{\alpha^i}$.
\end{proof}

The subtlety in implementing the method of proof above in a particular example arises when the vanishing cycles intersect. The images in \autoref{F:Lantern} illustrate a slightly nontrivial example, in which the vanishing cycles correspond to one side of the lantern relation in the mapping class group of a genus $0$ surface with four boundary components. The end result is a relative trisection diagram for a well known rational homology $4$--ball with boundary $L(4,1)$; see~\cite{EndoGurtas, FinSternRationalBD}. Note that from \autoref{F:Lantern3} to \autoref{F:Lantern4} we need to isotope the third vanishing cycle so as to be disjoint from a red $\alpha$ curve before proceeding to \autoref{F:Lantern5}. This corresponds to adjusting our drawing so that the third vanishing cycle does in fact live in the page obtained by surgering the central surface along the $\alpha$ curves. 

\begin{figure}[h]
\centering
\begin{subfigure}[b]{0.4\textwidth}\centering
\includegraphics[width=.75\textwidth]{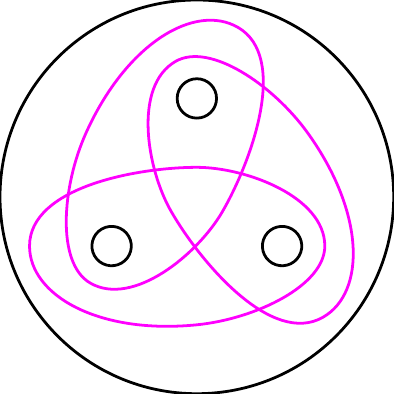}
\caption{Three vanishing cycles on a genus $0$ surface with $4$ boundary components.}\label{F:Lantern1}
\end{subfigure}

\vspace*{11pt}
\begin{tabular}{cc}
\begin{subfigure}[b]{0.4\textwidth}
\centering
\includegraphics[width=.75\textwidth]{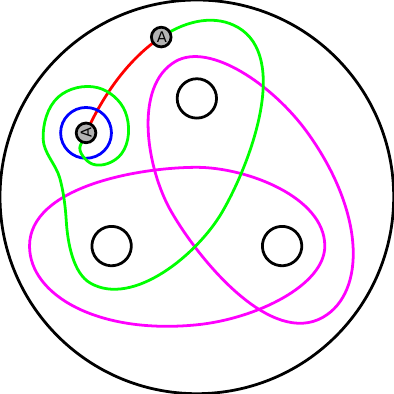}
\caption{One vanishing cycle turned into $\alpha$, $\beta$ and $\gamma$ curves, genus now equal to $1$.}\label{F:Lantern2}
\end{subfigure}&
\begin{subfigure}[b]{0.4\textwidth}\centering
\includegraphics[width=.75\textwidth]{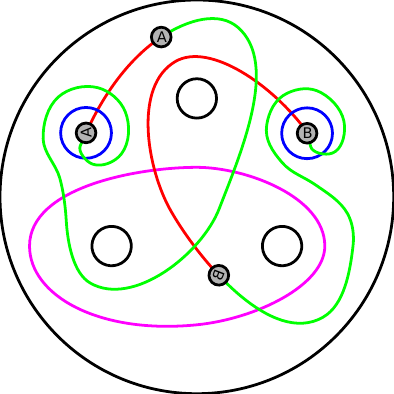}
\caption{Two vanishing cycles done, genus equals $2$; note that $C_3$ now intersects $\alpha$ curves.}\label{F:Lantern3}
\end{subfigure}\\[11pt]
\begin{subfigure}[t]{0.4\textwidth}\centering
\includegraphics[width=.75\textwidth]{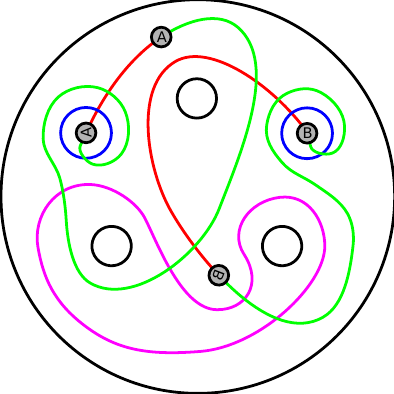}
\caption{$C_3$ isotoped to intersect only $\gamma$ curves.}\label{F:Lantern4}
\end{subfigure}&
\begin{subfigure}[t]{0.4\textwidth}\centering
\includegraphics[width=.75\textwidth]{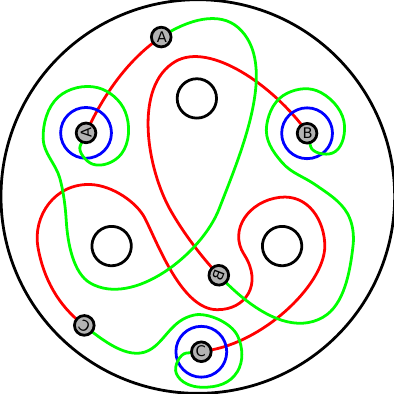}
\caption{A rational homology $B^4$.}\label{F:Lantern5}
\end{subfigure}
\end{tabular}
\caption{A relative trisection diagram for a rational homology $4$--ball with boundary $L(4,1)$.}\label{F:Lantern}
\end{figure}

\subsection{Plumbings}
In this section, we explain how to combine the method to obtain a diagram for achiral Lefschetz fibrations with well-known facts about plumbings of disk bundles over surfaces to describe trisection diagrams for plumbings of disk bundles. Notice however that for a single disk bundle of large Euler class, this method gives a much higher genus trisection than the method in \autoref{S:DiskBundles}.

\begin{definition}\label{def::plumb} A plumbing graph is a finite connected graph $\Gamma$ whose vertices and edges are assigned weights as follows:
\begin{itemize}
\item each vertex $v$ of $\Gamma$ carries two integer weights $e_v$, and $g_v$, with $g_v\geq 0$,
\item each edge of $\Gamma$ is assigned a sign +1 or -1. 
\end{itemize}
\end{definition}
To simplify notation, denote by $V(\Gamma)$ the set of vertices, $E(\Gamma)$ the set of edges, and $Q(\Gamma)$ the incidence matrix of $\Gamma$, that is, the matrix whose $q_{vw}$ entry is given by the signed count of edges joining the vertices $v$ and $w$ if $v\neq w$, and $q_{vv}=e_v$. In addition, for every vertex $v$ let $s_v=\sum_{w\in V(\Gamma)}q_{vw}$. Then, if $d_v$ is the degree of $v$, or in other words the weighted sum of edges that intersect $v$, we have $s_v=e_v+d_v$.

\begin{definition} Given a plumbing graph $\Gamma$, its modified plumbing graph is the connected graph $\Gamma^*$ that results from adding loose edges (edges with only one end at a vertex and the other end ``loose'') to $\Gamma$ as follows:
\begin{itemize}
\item at each vertex $v$ of $\Gamma$ attach $|s_v|$ loose edges,
\item to each loose edge assign the sign of $-s_v$. 
\end{itemize}
\end{definition}

If we call $\mathcal{L}\left(\Gamma^*\right)$ the set of loose edges and if we let $D$ be the diagonal matrix with entries given by the sums $s_v$, using the notation introduced after \autoref{def::plumb} we have \begin{align*}
&V\left(\Gamma^*\right)=V\left(\Gamma\right),\\
&E\left(\Gamma^*\right)=E\left(\Gamma\right)\cup \mathcal{L}\left(\Gamma^*\right),\\
&Q\left(\Gamma^*\right)=Q\left(\Gamma\right).
\end{align*}

To a modified plumbing graph $\Gamma^*$ with underlying plumbing graph $\Gamma$, one can associate a surface $F(\Gamma^*)$ and a set of vanishing cycles as follows: Assign to each vertex $v$ the closed orientable surface of genus $g_v$ and to each loose end of a loose edge a disk $D^2$ and connect these surfaces with tubes according to $\Gamma^*$ to obtain the surface $F(\Gamma^*)$ (i.e. for each edge, replace two disks, one in the interior of each surface corresponding to the ends of the edge, with $[0,1] \times S^1$). The vanishing cycles are simply the necks of the tubes (explicitly, the curves $\{1/2\} \times S^1 \subset [0,1] \times S^1$) used in the construction of $F(\Gamma^*)$ and each vanishing cycle's framing is equal to the sign $\pm 1$ of the edge of $\Gamma^*$ giving rise to that tube.

\begin{lemma}\label{plumb=lefs} Let $\Gamma$ be a plumbing graph. Then there exists an (achiral) Lefschetz fibration $\pi: L(\Gamma)\to D^2$ with the following properties:
\begin{enumerate}[label=(\roman*)]
\item the regular fiber of $\pi$ is diffeomorphic to $F(\Gamma^*)$, 
\item the vanishing cycles and their framings correspond to edges in $\Gamma^*$ and their signs, 
\item the monodromy $\mu$ is equal to the signed product of Dehn twists along the vanishing cycles.
\end{enumerate}

Furthermore, the 4--manifold $P(\Gamma)$ obtained as a plumbing of disk bundles of surfaces according to a plumbing graph $\Gamma$ and $L(\Gamma)$ constructed from the given vanishing cycle data are diffeomorphic. 
\end{lemma}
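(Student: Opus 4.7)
The plan is to prove the lemma by exhibiting an achiral Lefschetz fibration on $P(\Gamma)$ whose regular fiber, vanishing cycles, framings, and monodromy agree with those prescribed for $L(\Gamma)$; the diffeomorphism $P(\Gamma)\cong L(\Gamma)$ then follows from Kas's theorem (quoted in the paragraph preceding the lemma), since the handle decomposition associated to a Lefschetz fibration is determined by its vanishing cycle data. I would argue by induction on the total number of edges of $\Gamma$, counting loose and non-loose edges together.

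For the base case, take $\Gamma$ to be a single vertex $v$ of genus $g_v$ and weight $e_v$ with no incident edges, so that $\Gamma^{*}$ carries $|e_{v}|$ loose edges of sign $-\sgn(e_{v})$. Then $F(\Gamma^{*})$ is a genus--$g_{v}$ surface with $|e_{v}|$ boundary components, and the prescribed Lefschetz data consists of $|e_{v}|$ boundary-parallel vanishing cycles, each with sign $-\sgn(e_{v})$. I would verify that the corresponding Lefschetz total space is diffeomorphic to the Euler--$e_{v}$ disk bundle $E_{e_{v}}\to\Sigma_{g_{v}}$ by a secondary induction on $|e_{v}|$: starting from the trivial $\Sigma_{g_v}$-bundle $\Sigma_{g_{v}}\times D^{2}\to D^{2}$ (Euler number $0$), each boundary-parallel vanishing cycle addition amounts to attaching a two-handle with framing $\sgn(e_v)$ relative to the page along a curve encircling a puncture of the page, which caps off one boundary circle and contributes $\sgn(e_{v})$ to the Euler number of the enclosed disk bundle. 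After $|e_{v}|$ such steps, the boundary circles of $F(\Gamma^{*})$ are all capped off to produce the closed base $\Sigma_{g_{v}}$, and the total Euler number is $e_{v}$, so the total space is $E_{e_{v}}$.

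For the inductive step, I would show that adding a single edge of sign $\epsilon$ to $\Gamma$ (between distinct vertices $v,w$ or as a loop at $v$) corresponds exactly to the standard plumbing operation on the corresponding disk bundles. The addition of this edge to $\Gamma$ modifies $\Gamma^{*}$ by deleting one loose edge from each endpoint and inserting a single non-loose edge of sign $\epsilon$. Correspondingly, $F(\Gamma^{*})$ is altered by removing the two cap disks and tube-summing the resulting boundary circles (either joining two components or adding a handle to one component, in the loop case), and the vanishing cycle system is altered by deleting the two boundary-parallel cycles and inserting a new cycle of sign $\epsilon$ at the neck of the new tube. On the plumbing side, adding this edge identifies a $D^{2}\times D^{2}$ disk-fiber neighborhood in $E_v$ with one in $E_w$ via the swap-with-sign map; a local Kirby calculus argument identifies this plumbing operation with attachment of a single two-handle with framing $\epsilon$ along the neck of the tube joining the two fibers, which is exactly the modification produced by the new vanishing cycle. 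Combining the base case with iterated applications of the inductive step yields the Lefschetz fibration on $P(\Gamma)$ with the prescribed data, establishing both assertions of the lemma.

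The main obstacle will be the base case, specifically the framing-accounting needed to identify the Lefschetz total space built from $|e_{v}|$ boundary-parallel vanishing cycles with the standard disk bundle $E_{e_v}$. The essential point is that attaching a two-handle with framing $\pm 1$ (relative to the surface framing) along a boundary-parallel curve in a page of an open book is the four-dimensional realization of capping off a binding component and changing the Euler class of the resulting $D^2$-bundle over the closed-up base by $\mp 1$. Once this is established the inductive step is comparatively routine, since plumbing and Lefschetz fiber tube-summing are both local operations supported in disjoint neighborhoods within the total space, so the handle decompositions can be compared piece by piece.
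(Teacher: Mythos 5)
Your proposal takes a genuinely different route from the paper. The paper's argument is direct: since all the vanishing cycles (the tube necks) are pairwise disjoint on $F(\Gamma^*)$, the singular values can all be pushed to a single point, so $L(\Gamma)$ is a Lefschetz fibration over $D^2$ with exactly one singular fiber; that singular fiber is the surface configuration obtained by collapsing each tube neck to a node, which is precisely the configuration encoded by $\Gamma$ (plus some redundant disks coming from the loose edges), and $L(\Gamma)$ is a regular neighborhood of it. Your approach instead builds $L(\Gamma)$ up by induction on the number of edges of $\Gamma$, which is more hands-on but demands careful bookkeeping of Euler numbers and framings.

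There is, however, a genuine gap in your inductive step. You assert that adding an edge of sign $\epsilon$ to $\Gamma$ ``modifies $\Gamma^*$ by deleting one loose edge from each endpoint and inserting a single non-loose edge of sign $\epsilon$.'' This is not true in general. The number of loose edges at a vertex $v$ is $|s_v|=|e_v+d_v|$, and adding an incident edge of sign $\epsilon$ changes $d_v$, hence $s_v$, by $\epsilon$. Whether $|s_v|$ goes down or up by $1$ depends on whether $\epsilon$ has the opposite or the same sign as $s_v$. For a concrete failure, take two vertices $v,w$ with $e_v=e_w=0$ and $g_v=g_w=0$ and no edges yet: then $s_v=s_w=0$ and there are no loose edges. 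Adding a single edge of sign $+1$ gives $s_v=s_w=1$, so $\Gamma^*$ now has one loose edge of sign $-1$ at each of $v$ and $w$, in addition to the new regular edge. Thus three edges (and two boundary components of the fiber) were added, not one edge in exchange for two loose ones. In this situation the fiber $F(\Gamma^*)$ is not obtained from $F(\Gamma'^*)$ by a tube-sum, and your proposed local comparison of handle decompositions has nothing to latch onto. The same phenomenon occurs at any vertex where the new edge's sign agrees with that of $s_v$ (or where $s_v=0$), which includes common configurations such as the internal vertices of a linear chain as edges accumulate. To repair the argument you would need a separate inductive move for the case where loose edges are created rather than destroyed, and it is not obvious that such a move has a clean plumbing interpretation; at that point the direct ``single singular fiber'' argument of the paper becomes considerably more economical.

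A secondary issue worth flagging: in your base case you say you start from the trivial bundle $\Sigma_{g_v}\times D^2$, but the Lefschetz fiber is $F(\Gamma^*)$, which already has $|e_v|$ boundary circles. Each step must therefore both puncture the fiber (exposing a new boundary circle) and add a boundary-parallel vanishing cycle; you only mention the second. The framing arithmetic you describe (framing $\sgn(e_v)$ relative to the page, Euler number shifting by $\sgn(e_v)$ per handle) does check out, but the starting point and intermediate fibers should be the punctured surfaces, not the closed ones.
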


\begin{proof} To see that $L(\Gamma)$ is diffeomorphic to $P(\Gamma)$ we need to show that $L(\Gamma)$ is a regular neighborhood of a collection of surfaces of the right genus transversely- and self-intersecting according to $\Gamma$. Since all the vanishing cycles are disjoint on $F(\Gamma^*)$, we can see $L(\Gamma)$ as a Lefschetz fibration with exactly one singular fiber containing all the singularities. Since each vanishing cycle becomes a transverse intersection point in the singular fiber, with sign given by the sign of the vanishing cycle, we immediately get the correct configuration of surfaces. Since there is only one singular value, $L(\Gamma)$ is a neighborhood of that singular fiber.
\end{proof}

\autoref{plumb=lefs} can be combined with \autoref{C:LFtoTrisection} to obtain trisections and trisection diagrams for plumbing manifolds. For example, if $\Sigma$ is the closed orientable surface of genus $G>1$ and $p:E_n\to \Sigma$ is the disk bundle over $\Sigma$ with Euler number $n$, let $\pi:E_n\to D^2$ be the (achiral) Lefschetz fibration described in \autoref{plumb=lefs}. If $n\neq 0$, there is a $(|n|+G,|n|+2G-1;G,|n|)$ trisection of $E_n$ with diagram given by \autoref{fig::disk-surface}. If $n=0$, there is a $(G+2, 2G+1;G,2)$ trisection of $E_n$ with diagram given by \autoref{fig::disk-surface0}.

\begin{figure}[h]
\begin{subfigure}[b]{0.4\textwidth}
\centering
\includegraphics[width=\textwidth]{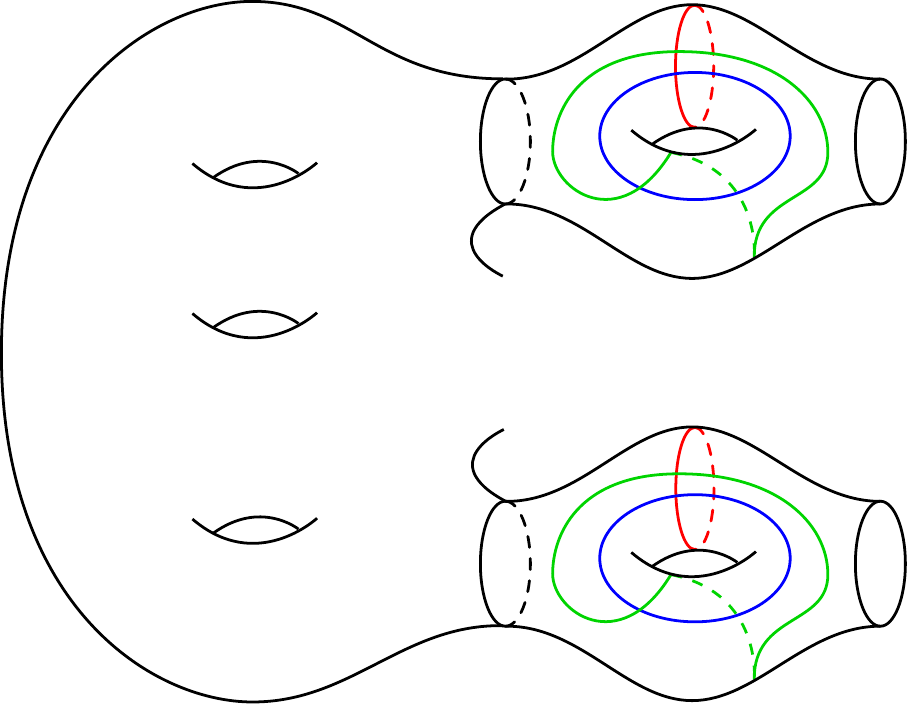}
\caption{Disk bundle over a closed surface with Euler number $n<0$. }\label{fig::disk-surface}
\end{subfigure}\hspace*{1cm}
\begin{subfigure}[b]{0.4\textwidth}
\centering
\includegraphics[width=\textwidth]{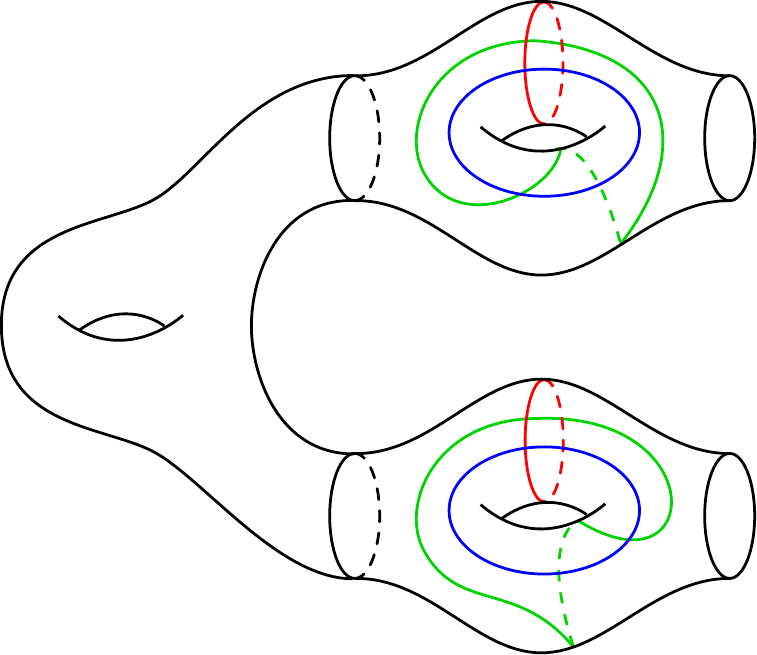}
\caption{Disk bundle over a torus with Euler number $0$.}\label{fig::disk-surface0}
\end{subfigure}
\caption{Relative trisection diagrams for the disk bundles over closed orientable surfaces.}
\end{figure}

A less trivial example is the negative definite $E_8$ manifold. The plumbing graph, the modified plumbing graph, the regular surface, and the trisection diagram are shown in \autoref{fig::E_8}.

\begin{figure}[h]
\centering
\begin{subfigure}[b]{\textwidth}
\centering
\begin{tikzpicture}[scale=0.8]
\foreach \x in {1,...,7}
    \draw[fill] (2*\x,0) circle (0.1cm);
\foreach \x in {1,...,7}
\node at (2*\x,0.5){$(0,-2)$};
\foreach \x in {2,...,7}
    \draw[-] (2*\x-2,0)--(2*\x,0) node[below,pos=0.5]{$+$};
\draw[-] (10,0)--(10,-2) node[right,pos=0.5]{$+$};
\draw[fill] (10,-2) circle (0.1cm);
\node at (10,-2.5){$(0,-2)$};
\end{tikzpicture}
\caption{The plumbing graph $E_8$.}
\end{subfigure}
\begin{subfigure}[b]{\textwidth}
\centering
\begin{tikzpicture}[scale=0.8]
\foreach \x in {1,...,7}
    \draw[fill] (2*\x,0) circle (0.1cm);
\draw[fill] (10,-2) circle (0.1cm);
\foreach \x in {2,...,7}
    \draw[-] (2*\x-2,0)--(2*\x,0) node[below,pos=0.5]{$+$};
\draw[-] (10,0)--(10,-2) node[right,pos=0.5]{$+$};
\draw[-] (2,0)--(2,1)  node[right,pos=0.5]{$+$};
\draw[-] (10,0)--(10,1)  node[right,pos=0.5]{$-$};
\draw[-] (14,0)--(14,1)  node[right,pos=0.5]{$+$};
\draw[-] (10,-2)--(10,-3) node[right,pos=0.5]{$+$};
\end{tikzpicture}
\caption{The modified plumbing graph $E_8^*$}
\end{subfigure}
\begin{subfigure}[b]{\textwidth}
\centering
\includegraphics[width=0.7\textwidth]{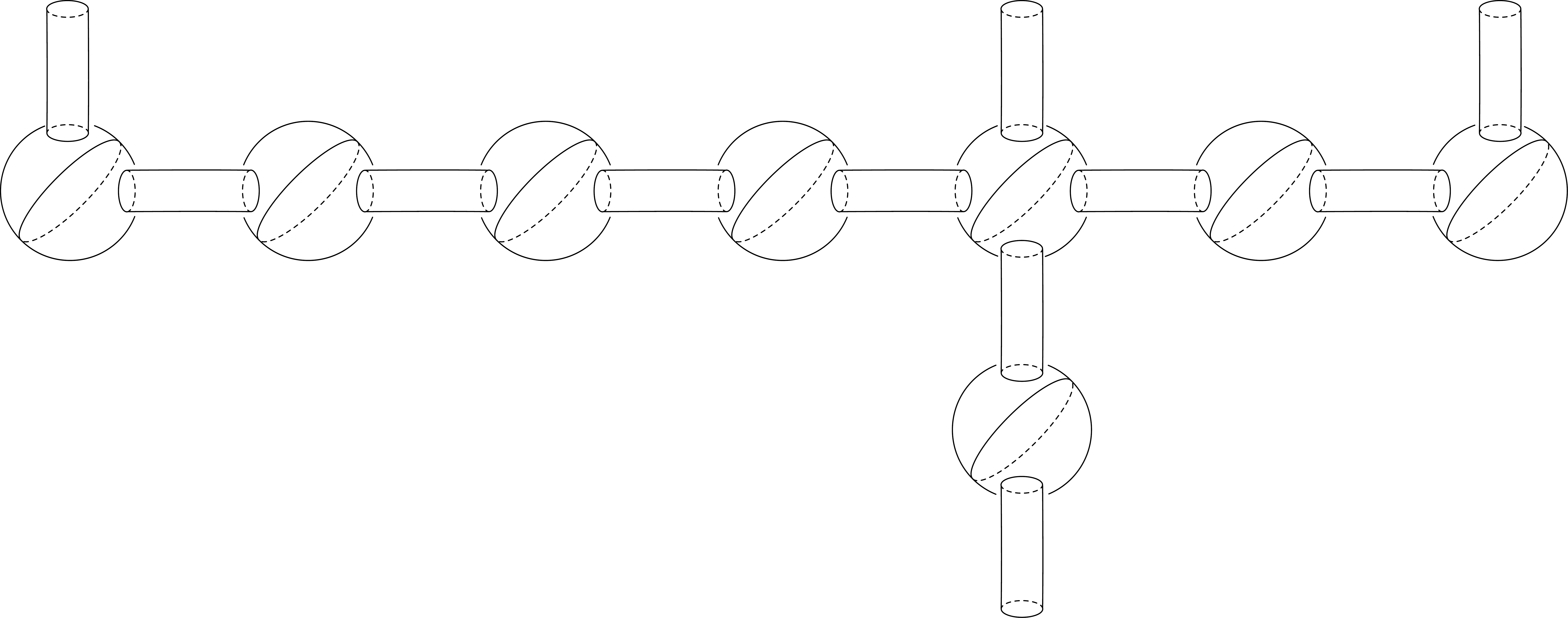}
\caption{The regular fiber of $L(E_8)$}
\end{subfigure}
\begin{subfigure}[b]{\textwidth}
\centering
\includegraphics[width=0.7\textwidth]{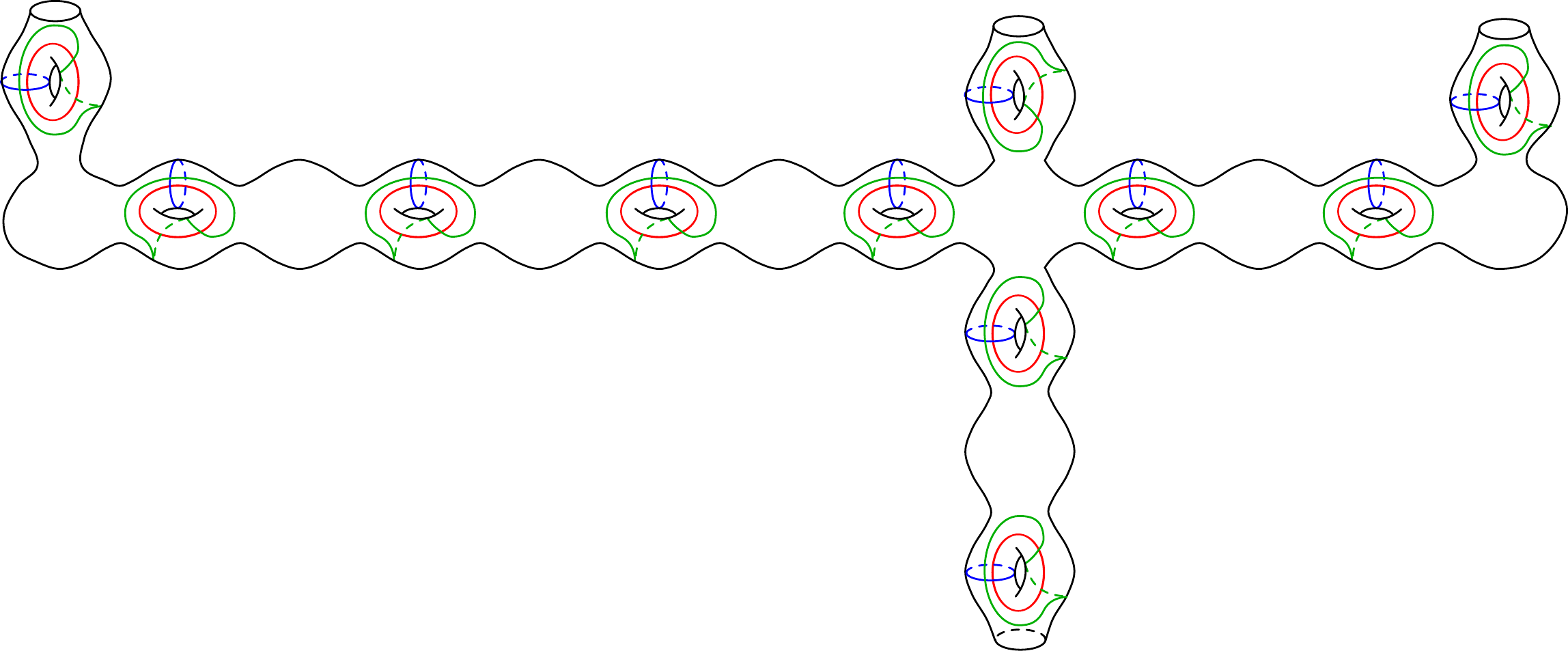}
\caption{The trisection diagram of $P(E_8)$.}
\end{subfigure}
\caption{The negative definite $E_8$ manifold. Its boundary is the Poincar\'e homology sphere.}\label{fig::E_8}
\end{figure}

\subsection{The product of the circle with knot complements}
In this section we show that if a knot $K\subset S^3$ is in bridge position with $B$ bridges, then $X=S^1\times S^3\setminus N(K)$ admits a $(6B-1, 2B+1; 1,4)$ trisection. The description of the trisection and the trisection diagram will depend on the notion of doubly-pointed diagrams for knots in $S^3$ and so we begin the section with its definition. For the details regarding this construction we refer the reader to \cite[Section~3.2]{ras} and \cite[Example~3.4]{man-intro}.

\begin{definition} A doubly-pointed diagram for a knot $K\subset S^3$, is a tuple $(\Sigma,\mathcal{E}, \mathcal{F},z_1,z_2)$, where $(\Sigma,\mathcal{E}, \mathcal{F})$ is a Heegaard diagram for $S^3$ and $z_1$ and $z_2$ are distinct points on $\Sigma$ in the complement of $\mathcal{E}$ and $\mathcal{F}$, such that, in the associated handle decomposition of $S^3$, $K$ is the union of two arcs connecting the index $0$ and $3$ critical points, avoiding the co-cores of the $1$--handles and the cores of the $2$--handles, intersecting $\Sigma$ at $z_1$ and $z_2$.
\end{definition}

Note that if $K$ is given in bridge position with $B$ bridges, stabilizing the genus $0$ Heegaard splitting $B-1$ times gives a genus $B-1$ doubly pointed diagram describing $K$.

This description can then be translated into a Morse function $f:S^3\to [0,3]$ such that the knot $K$ is obtained as the union of the gradient flow lines of $f$ joining the unique index 3 critical point with the unique index 0 critical point and passing through the points $z_1$ and $z_2$. After a small perturbation we may assume that $f|_{\partial N(K)}$ is a standard Morse function on $T^2$; the only feature we really care about is that $f^{-1}(3/2)$ intersect $N(K)$ as meridinal disks and thus splits $\partial N(K)$ into two annuli.\\
\begin{figure}[h]
\begin{tikzpicture}[scale=1]
\draw[-] (0,0)--(6,0);
\draw[-] (0,2)--(6,2);
\draw[-] (0,4)--(6,4);
\draw[-] (0,0)--(0,4);
\draw[-] (6,0)--(6,4);
\draw[-] (1,0)--(1,2);
\draw[-] (3,0)--(3,2);
\draw[-] (5,0)--(5,2);
\draw[-] (2,2)--(2,4);
\draw[-] (4,2)--(4,4);
\node at (0.5,1){$S_2^-$};
\node at (5.5,1){$S_2^-$};
\node at (2,1){$S_1^-$};
\node at (4,1){$S_3^-$};
\node at (1,3){$S_3^+$};
\node at (3,3){$S_2^+$};
\node at (5,3){$S_1^+$};
\end{tikzpicture}
\caption{A projection of $S^1\times S^3\setminus N(K)$ into $[0,6]\times [0,3]$ using a factor of the angle of $S^1$ and the restriction of a Morse function on $S^3$ to the knot complement.}\label{fig::grid}
\end{figure}
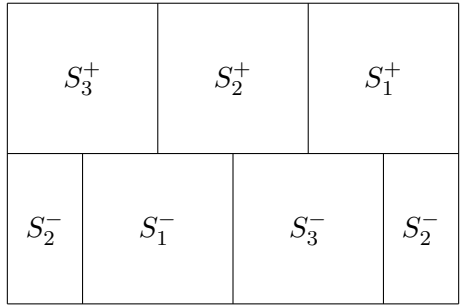

Identify $S^1$ with $[0,6]/0 \sim 6$, draw a grid on $[0,6]\times [0,3]$ as in \autoref{fig::grid} and label the squares $S_i^\pm,\; i=1,2,3$ with the sign chosen depending on the position of the square relative to the horizontal line $[0,1]\times \{3/2\}$. Notice that the left and right ends of the figure should be identified since $[0,6]$ is actually $[0,6]/0 \sim 6 = S^1$. Consider the projection $\pi: S^1\times S^3\setminus N(K)\to S^1 \times [0,3]$ given by the identity in the first component, and the restriction of the Morse function $f$ to the knot complement in the second component. Over each vertical line segment in \autoref{fig::grid} is a $3$--dimensional handlebody with $B$ $1$--handles, realized as the intersection of the genus $(B-1)$ handlebody $U^\pm$ with the knot complement $S^3\setminus N(K)$. Therefore, over each square lies a $4$--dimensional space diffeomorphic to $\natural ^B S^1\times B^3$.  Similarly, over each interior vertex lies the punctured surfaces $\Sigma'=\Sigma\setminus\left(D(z_1)\sqcup D(z_2)\right)$, where $z_1,z_2$ are the points in $\Sigma$ that describe the knot $K\subset S^3$, and $D(z_j) \; (j=1,2),$ is a disk neighborhood of $z_j$ in $\Sigma$.  We thus see that over each interior and horizontal edge of a square lies the genus $2B-1$, $3$--dimensional handlebody $I\times \Sigma'$. \\

We will obtain a trisection of $X=S^1\times S^3\setminus N(K)$ by connecting the preimage of $S_i^+$ to the preimage of $S_i^-$ using 4--dimensional 1--handles realized as tubular neighborhoods of appropriately chosen arcs. Let $k=1,\ldots,6$ and $j=1,2$, and to simplify notation identify $\partial D(z_j)$ with the unit circle in $\C$, and denote by $\xi_j^k$ the $k$-th power of a third root of unity $\xi\in S^1$, regarded as a point in $\partial D(z_j)$. Consider the arcs $a_{kj}$ obtained by taking the product of the preimage of $[k-1,k]$ in $S^1 = [0,6]/(0\sim 6)$ with the point $\xi_j^k$ in $S^3\setminus N(K)$. The $i$-th piece of the trisection of $X$ into $X_1 \cup X_2 \cup X_3$ will be obtained by connecting $\pi^{-1}(S_i^+)$ to $\pi^{-1}(S_i^-)$ using the 1--handles whose cores project into the grid as a horizontal edge disjoint from the squares $S_i^+$ and $S_i^-$, and removing from it the other 1--handles. Specifically, if we denote the tubular neighborhood of $a_{kj}$ ($k=1,\ldots,6$, $j=1,2$) in $X$ by $\nu_{kj}$, then $$X_i=\left(\pi^{-1}\left(S_i^+\sqcup S_i^-\right)\setminus \underset{\footnotesize\begin{array}{c}l\neq i,i+3\\ j=1,2\end{array}}{\cup} \nu_{lj}\right) \bigcup \left( \underset{j=1,2}{\cup} \nu_{ij}\sqcup \nu_{i+3,j} \right).$$ Since for $k\not\equiv i\mod 3$ the cores of the tubes $\nu_{kj}$ lie in the boundary of the squares $S_i^\pm$, removing them from their preimages does not change the diffeomorphism type of this space. Thus, $X_i$ is a connected space and since $\pi^{-1}(S_i^\pm)\cong \natural ^B S^1\times B^3$, we see that $X_i$ is diffeomorphic to $\natural^{2B+3} S^1\times B^3$.
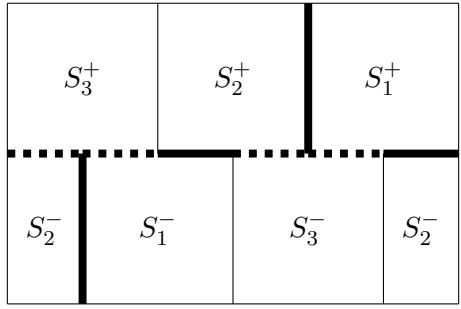
\begin{figure}[h]
\begin{tikzpicture}[scale=1]
\draw[-] (0,0)--(6,0);
\draw[-] (0,4)--(6,4);
\draw[-] (0,0)--(0,4);
\draw[-] (6,0)--(6,4);
\draw[-,line width=3pt] (1,0)--(1,2);
\draw[-] (3,0)--(3,2);
\draw[-] (5,0)--(5,2);
\draw[-] (2,2)--(2,4);
\draw[-,line width=3pt] (4,2)--(4,4);
\draw[dashed,line width=3pt] (0,2)--(1,2);
\draw[dashed,line width=3pt] (1,2)--(2,2);
\draw[-,line width=3pt] (2,2)--(3,2);
\draw[dashed,line width=3pt] (3,2)--(4,2);
\draw[dashed,line width=3pt] (4,2)--(5,2);
\draw[-,line width=3pt] (5,2)--(6,2);
\node at (0.5,1){$S_2^-$};
\node at (5.5,1){$S_2^-$};
\node at (2,1){$S_1^-$};
\node at (4,1){$S_3^-$};
\node at (1,3){$S_3^+$};
\node at (3,3){$S_2^+$};
\node at (5,3){$S_1^+$};
\end{tikzpicture}
\caption{The pieces involved in the pairwise intersection $X_1\cap X_2$.}\label{fig::grid-pair}
\end{figure}

Next we analyze the pairwise intersections of the pieces, and since the calculations are analogous for any pair $(i,i+1)$, we present the details for $X_1\cap X_2$ and leave out those concerning the other cases. There are three different types of spaces involved in the double intersection:  the pre images of the vertical segments of the intersections $S_1\cap S_2$, the preimages of the horizontal intersections, and $3$--dimensional tubular neighborhoods of some of the arcs $a_{kj}$. These sets are highlighted in \autoref{fig::grid-pair}, with the dotted line representing the presence of tubular neighborhood of two arcs. We then see that the space $X_1\cap X_2$ is diffeomorphic to the disjoint union of two 3--dimensional handlebodies of genus $B$ and two 3--dimensional handlebodies of genus $2B+1$ (two copies of $I\times \Sigma'$), connected to one another using eight 3--dimensional 1-handles. Therefore, $X_1\cap X_2$ is diffeomorphic to $\natural^{6B+3} S^1\times D^2$.

The triple intersection $F=X_1\cap X_2\cap X_3$ is the union of six copies of the punctured surface $\Sigma'=\Sigma\setminus\left(N(z_1)\sqcup N(z_2)\right)$ realized as the preimages of the six interior vertices in \autoref{fig::grid}, connected to one another using band neighborhoods of the arcs $a_{kj}$ in $S^1\times \Sigma '$. A simple computation shows that a surface so decomposed has Euler characteristic equal to $-12B$ and so, to establish the diffeomorphism type of this central surface $F$, it is enough to calculate the number of boundary components. With that in mind, notice that $\partial F$ is precisely the space $X_1\cap X_2\cap X_3\cap \partial X$, and that this space is the result of joining the copies of $\partial D(z_j)$ lying above the six internal vertices to one another using band neighborhoods of the six arcs $a_{jk}$ for $j=1,2$. For each $j=1,2$ this results in two circles, for a total of four boundary components. A schematic picture that describes these components can be found in \autoref{fig::grid-binding}. A simple Euler characteristic argument then shows that the surface $X_1\cap X_2\cap X_3$ has genus $6B-1$.

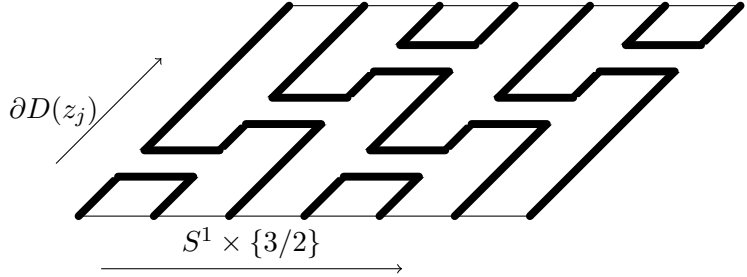
\begin{figure}[h]
\begin{tikzpicture}[scale=1,x={(1,0)}, y={(0.7,0.7)}]
\draw[-] (0,0)--(6,0);
\draw[-] (0,4)--(6,4);
\draw[->] (1,-1)--(5,-1) node[above,pos=0.5]{$S^1\times \{3/2\}$};
\draw[->] (-1,1)--(-1,3) node[left,pos=0.5]{$\partial D(z_j)$};
\foreach \x in {0,...,6}
	\draw[line width=3pt,line cap=round] (\x,0)--(\x,4);
\foreach \x in {0,1,2}
	\foreach \a in {0,3}
		\draw[fill=white,draw=white] (\a+\x-0.25,\x+0.75)--(\a+\x+1+0.25,\x+0.75)-- (\a+\x+1+0.25,\x+1.25)--(\a+\x-0.25,\x+1.25)--(\a+\x-0.25,\x+0.75);
\foreach \x in {0,1,2}
	\foreach \a in {0,3}
		\foreach \e in{-1,1}
			\draw[-,line width=3pt,line cap=round] (\a+\x,\x+1+\e/4)--(\a+\x+1,\x+1+\e/4);
\end{tikzpicture}
\caption{Two of the four components of $X_1\cap X_2\cap X_3\cap\partial X$. This parallelogram represents a torus as follows: the horizontal component represents the $S^1$ direction in the middle of \autoref{fig::grid} and the slanted direction represents the direction of $\partial D(z_j)$ which is ``internal'' to $\Sigma'$ and therefore not represented in \autoref{fig::grid}.}\label{fig::grid-binding}
\end{figure}

Next, to understand $X_1\cap X_2\cap \partial X$ intersect the highlighted pieces in \autoref{fig::grid-pair} with $\partial N(K)$. Above the vertical edges lies a cylinder, above each horizontal edge two disks realized as $I\times (\partial D(z_1)\setminus N(\xi_1^3))$ and $I\times (\partial D(z_2)\setminus N(\xi_2^3))$, and above each dotted line two band neighborhoods of the arcs (one for each of $z_1$ and $z_2$). Thus, $X_1\cap X_2\cap \partial X$ is diffeomorphic to the disjoint union of six cylinders connected to one another using eight bands. A surface with this decomposition has Euler characteristic equal to -4, and since its boundary is the same as the boundary of the central surface $F$ we conclude that $X_1\cap X_2\cap \partial X$ is a surface of genus $1$ and $4$ boundary components.

The last intersection to consider is $X_1\cap \partial X$. This space consists of two solid tori, one above each one of $S_1^\pm$, and two 3--dimensional 1--handles that lie above $[0,1]\times\{3/2\}$ and  $[3,4]\times\{3/2\}$. This shows that $X_1\cap \partial X$ is a genus 5 handlebody. Moreover, notice that each solid torus is a relative compression body from one of the cylinders in $X_1\cap X_2\cap \partial X$ to a cylinder in $X_1\cap X_3\cap \partial X$, and that each solid torus contains one of the disks in each one of $X_1\cap X_2\cap \partial X$ and $X_1\cap X_3\cap \partial X$. In addition, the 3--dimensional 1--handles are relative compression bodies between the band neighborhoods of the arcs, and so  $X_1\cap \partial X$ is diffeomorphic to the product of an interval and the surface $X_1\cap X_2\cap \partial X$.\\

Finally, to obtain a trisection diagram for $S^1\times S^3\setminus N(K)$ all that is left to do is understand the collection of disks in the pairwise intersections $X_i\cap X_{i+1}$ that are bounded by curves that lie entirely in the triple intersection $F=X_1\cap X_2\cap X_3$. One more time we focus only on the intersection $X_1\cap X_2$. In this case we have:
\begin{itemize}
\item the collection $\mathcal{F}$ of $B-1$ curves that bound disks $D_i^+$ at $\{4\}\times U^+$,
\item the collection $\mathcal{E}$ of $B-1$ curves that bound disks $D_i^-$ at $\{1\}\times U^-$,
\item a collection of $2B$ curves stemming from a handle decomposition of $[2,3]\times \Sigma'$ relative to the union of $\{2,3\}\times \Sigma'$ with band neighborhoods of the arcs $[2,3]\times\{\xi^2_j\}$, $j=1,2$. The curves are realized as the union of arcs in $\{2\}\times\Sigma'$ with arcs in $\{3\}\times\Sigma'$ going through the bands; $2(B-1)$ of the arcs arise from some 1--handles in $\Sigma'$ that give rise to genus, one other from a 1--handle in $\Sigma'$ that gives rise to the boundary components, and one other that connects the two bands. 
\item a collection analogous to the one above but related to $[5,6]\times \Sigma'$.
\end{itemize}

\begin{figure}[h!]
\centering
\begin{subfigure}[b]{\textwidth}
\centering
\begin{tikzpicture}[scale=0.8,rotate=30]
\draw (0,0) circle [radius=2cm];
\draw[fill=white] (60: 2cm) circle[radius=0.4cm] node{$\color{red}\mathcal{F}$};
\draw[fill=white] (120: 2cm) circle[radius=0.4cm] node{$\color{blue}\mathcal{E}$};
\draw[fill=white] (180: 2cm) circle[radius=0.4cm] node{$\color{Green}\mathcal{F}$};
\draw[fill=white] (240: 2cm) circle[radius=0.4cm] node{$\color{red}\mathcal{E}$};
\draw[fill=white] (300: 2cm) circle[radius=0.4cm] node{$\color{blue}\mathcal{F}$};
\draw[fill=white] (360: 2cm) circle[radius=0.4cm] node{$\color{Green}\mathcal{E}$};
\draw[fill=white,draw=white] (90: 1.5cm) circle[radius=0.4cm] node{$\color{Green}\mathcal{A}$};
\draw[fill=white,draw=white] (150: 1.5cm) circle[radius=0.4cm] node{$\color{red}\mathcal{A}$};
\draw[fill=white,draw=white] (210: 1.5cm) circle[radius=0.4cm] node{$\color{blue}\mathcal{A}$};
\draw[fill=white,draw=white] (270: 1.5cm) circle[radius=0.4cm] node{$\color{Green}\mathcal{A}$};
\draw[fill=white,draw=white] (330: 1.5cm) circle[radius=0.4cm] node{$\color{red}\mathcal{A}$};
\draw[fill=white,draw=white] (30: 1.5cm) circle[radius=0.4cm] node{$\color{blue}\mathcal{A}$};
\end{tikzpicture}
\caption{Recipe for drawing a trisection diagram for $S^1\times S^3\setminus N(K)$. The smaller circles with a letter inside represent copies of $\Sigma'$, the sub arcs of the larger circle represent the bands connecting the different copies of $\Sigma'$. Here $\mathcal{A}$ denotes the curves obtained as union of arcs arising from $I\times \Sigma'$, whereas $\mathcal{F},\mathcal{E}$ denote the curves in the doubly pointed diagram for the knot $K$. Additionally, each color represents one collection of $\alpha,\beta,\gamma$.}\label{fig::knot_recipe}
\end{subfigure}
\bigskip
\begin{tabular}{cc}
\begin{subfigure}[b]{0.3\textwidth}
\centering

\vspace*{1cm}
\includegraphics[scale=0.4]{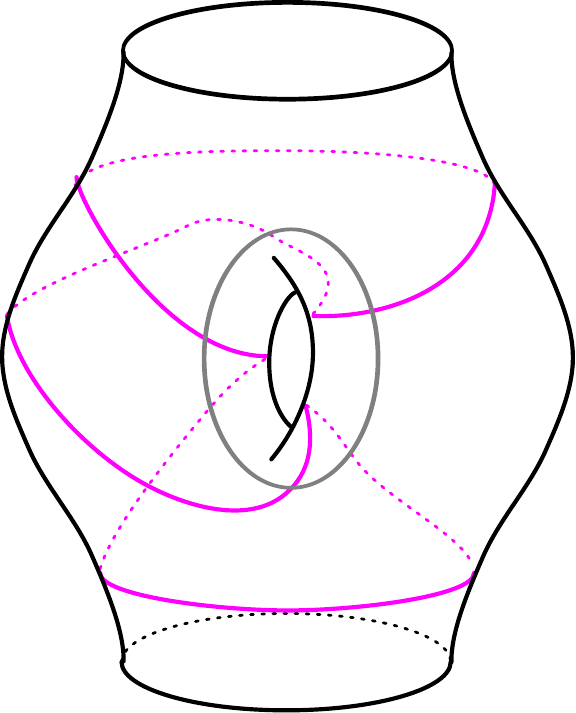}
\vspace*{1cm}
\caption{A Heegaard diagram for $S^3\setminus N(T_2,3)$. Denote by $\mathcal{F}$ the pink curve and by $\mathcal{E}$ the gray curve.}\label{fig::knot_comp-3dim}
\end{subfigure}\hspace*{\fill}&
\begin{subfigure}[b]{0.6\textwidth}
\centering
\includegraphics[scale=0.25]{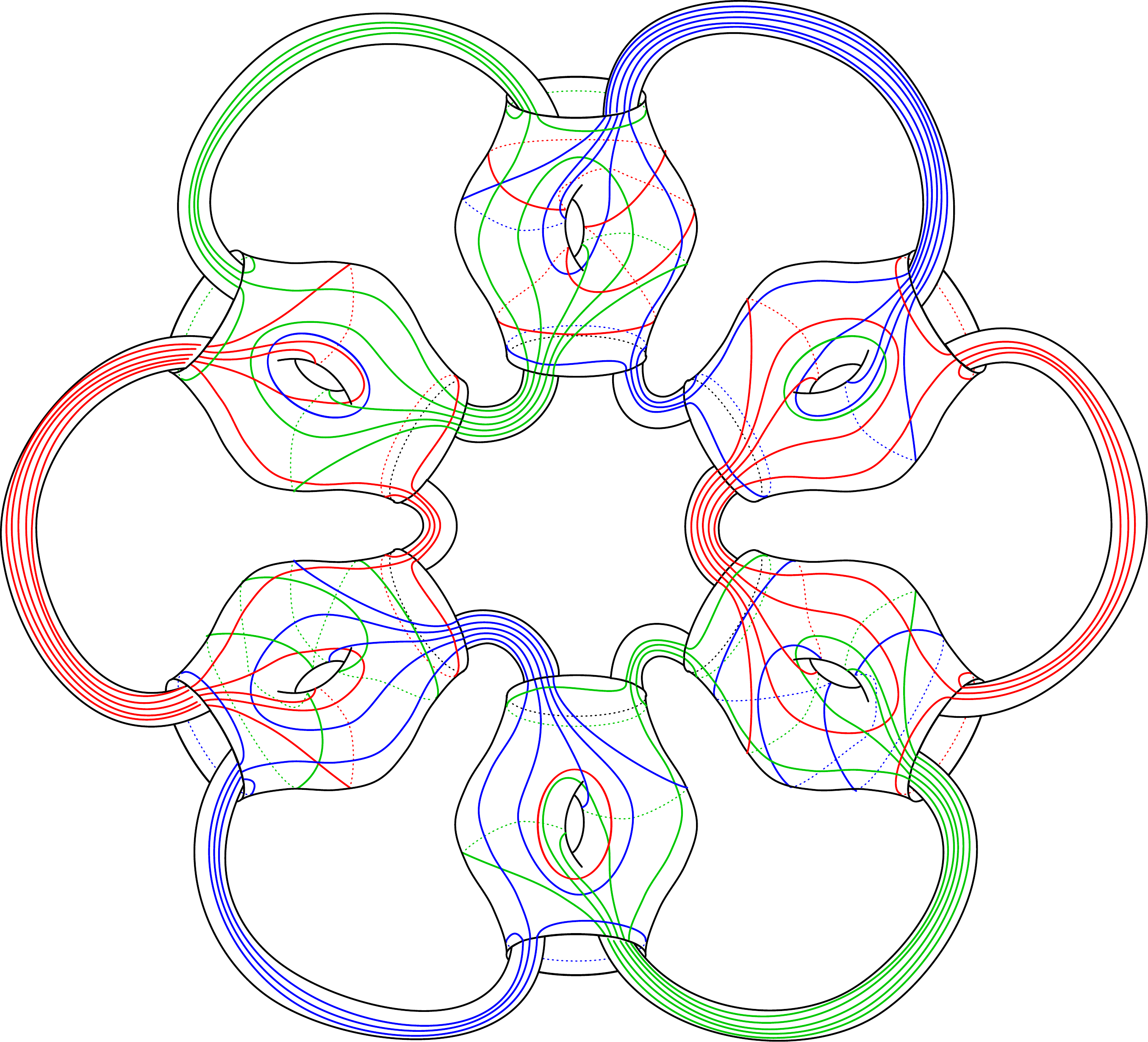}
\caption{A trisection diagram for $S^1\times S^3\setminus N(T_{2,3})$.}\label{fig::knot_comp-4dim}
\end{subfigure}
\end{tabular}
\caption{Relative trisection of $S^1\times S^3\setminus N(K)$}
\end{figure}

Thus, the trisection diagram consists of a surface of genus $6B-1$ with 4 boundary components, realized as the union of six copies of $\Sigma'$ joined to one another using twelve bands, and curves coming either from the Heegaard splitting of $S^3$ that corresponds to the doubly pointed diagram of $K$, or from the handlebody structure of $I\times \Sigma'$ and distributed along the pieces of $\Sigma'$ as shown in \autoref{fig::knot_recipe}.

\bibliographystyle{plain}
\bibliography{ReferencesRelTri}
\end{document}